\documentclass[12pt]{amsart}
\usepackage{latexsym,amssymb,amsmath,lscape,young}
\usepackage{youngtab}
\textwidth=16cm
\topmargin=-5mm
\oddsidemargin=0mm
\evensidemargin=0mm
\textheight=22cm

\usepackage{amsmath,amsthm,amssymb,amscd}
\usepackage[mathscr]{eucal}
\usepackage{verbatim}

\usepackage{color}

\hfuzz1pc

\newtheoremstyle{custom}
  {3pt}
  {3pt}
  {\slshape}
  {}
  {\bfseries}
  {.}
  { }
   {}
\theoremstyle{custom}
\newtheorem{theorem}{Theorem}[section]

\newtheorem{proposition}[theorem]{Proposition}
\newtheorem{proposition/definition}[theorem]{Proposition/Definition}
\newtheorem{lemma}[theorem]{Lemma}
\newtheorem{corollary}[theorem]{Corollary}

\theoremstyle{definition}
\newtheorem{definition}[theorem]{Definition}

\newtheorem{example}[theorem]{Example}

\theoremstyle{remark}
\newtheorem{remark}[theorem]{Remark}

\begin{document}
\title{The stabilizer of immanants}
\author{Ke Ye}
\date{Nov 2010}
\begin{abstract}
Immanants are homogeneous polynomials of degree $n$ in $n^{2}$ variables associated to the irreducible representations of the symmetric group $\mathfrak{S}_{n}$ of $n$ elements. We describe immanants as trivial $\mathfrak{S}_{n}$ modules and show that any homogeneous polynomial of degree $n$ on the space of $n\times n$ matrices preserved up to scalar by left and right action by diagonal matrices and conjugation by permutation matrices is a linear combination of immanants. Building on works of M. Ant{\'o}nia Duffner \cite{MR1275631} and Coelho, M. Purifica{\c{c}}{\~a}o \cite{MR1412753}, we prove that for $n\ge6$ the identity component of the stabilizer of any immanant (except determinant, permanent, and $\pi=(4,1,1,1)$) is $\Delta(\mathfrak{S}_{n})$ $\ltimes T(GL_{n}\times GL_{n})$ $\ltimes \mathbb{Z}_{2}$, where $T(GL_{n}\times GL_{n})$ is the group consisting of pairs of $n\times n$ diagonal matrices with the product of determinants $1$, acting by left and right matrix multiplication, $\Delta(\mathfrak{S}_{n})$ is the diagonal of $\mathfrak{S}_{n}\times \mathfrak{S}_{n}$, acting by conjugation, ($\mathfrak{S}_{n}$ is the group of symmetric group.) and $\mathbb{Z}_{2}$ acts by sending a matrix to its transpose. Based on the work of Coelho, M. Purifica{\c{c}}{\~a}o and Duffner, M. Ant{\'o}nia \cite{MR1674232}, we also prove that for $n\ge 5$ the stabilizer of the immanant of any non-symmetric partition (except determinant and permanent) is $\Delta(\mathfrak{S}_{n})$ $\ltimes T(GL_{n}\times GL_{n})$ $\ltimes \mathbb{Z}_{2}$.
\end{abstract}

\email{kye@math.tamu.edu}
\maketitle
\section{\textbf{Introduction}}
D.E. Littlewood \cite{MR2213154} defined polynomials of degree $n$ in $n^{2}$ variables generalizing the notion of determinant and permanent, called immanants, and are defined as follows:
\begin{definition}
For any partition $\pi\vdash n$, define a polynomial of degree n in matrix variables $(x_{ij})_{n\times n}$ associated to $\pi$ as follows:
\[
P_{\pi}:=\sum_{\sigma\in \mathfrak{S}_{n}}\chi_{\pi}(\sigma)\prod_{i=1}^{n}x_{i\sigma(i)}                 \]
This polynomial is called the immanant associated to $\pi$.
\end{definition}
\begin{example}
If $\pi=(1,1,...,1)$ then $P_{\pi}$ is exactly the determinant of the matrix $(x_{ij})_{n\times n}$.
\end{example}
\begin{example}
If $\pi=(n)$ then $P_{\pi}$ is the permanent $\sum_{\sigma\in \mathfrak{S}_{n}}$$\prod_{i=1}^{n} x_{i\sigma(i)}$.
\end{example}
\begin{remark}
Let $E$ and $F$ be $\mathbb{C}^{n}$. Since immanants are homogeneous polynomials of degree $n$ in $n^{2}$ variables, we can identify them as elements in $S^{n}(E\otimes F)$ (Identify the space $E\otimes F$ with the space of $n\times n$ matrices). The space $S^{n}(E\otimes F)$ is a representation of $GL(E\otimes F)$, in particular, it is a representation of $GL(E)\times GL(F)\subset GL(E\otimes F)$. So we can use the representation theory of $GL(E)\times GL(F)$ to study immanants. The explicit expression of an immanant $P_{\pi}$ in $S^{n}(E\otimes F)$ is:
\[
\sum_{\sigma\in
\mathfrak{S}_{n}}\chi_{\pi}(\sigma)\prod_{i=1}^{n}e_{i}\otimes f_{\sigma(i)},
\]
where $\prod$ is interpreted as the symmetric tensor product.
\end{remark}

In section $3$ we remark that immanants can be defined as trivial $\mathfrak{S}_{n}$ modules (Proposition \ref{proposition}). Duffner, M. Ant{\'o}nia found the system of equations determining the stabilizer of immanants (except determinant and permanent) for $n\ge 4$ in \cite{MR1275631} in year $1994$. $2$ years later, Coelho, M. Purifica{\c{c}}{\~a}o proved in \cite{MR1412753} that if the system of equations in \cite{MR1275631} has a solution, then permutations $\tau_{1}$ and $\tau_{2}$ in the system must be the same. Building on works of Duffner and Coelho, We prove the main results Theorem \ref{theorem1} and Theorem \ref{theorem2} of this paper in section $4$.
\begin{theorem}\label{theorem1}
Let $\pi$ be a partition of $n\ge 6$ such that $\pi\ne(1,...,1)$, $(4,1,1,1)$ or $(n)$, then the identity component of the stabilizer of the immanant $P_{\pi}$ is $\Delta(\mathfrak{S}_{n})$ $\ltimes T(GL_{n}\times GL_{n})$ $\ltimes \mathbb{Z}_{2}$, where $T(GL_{n}\times GL_{n})$ is the group consisting of pairs of $n\times n$ diagonal matrices with the product of determinants $1$, acting by left and right matrix multiplication, $\Delta(\mathfrak{S}_{n})$ is the diagonal of $\mathfrak{S}_{n}\times \mathfrak{S}_{n}$, acting by conjugation, ($\mathfrak{S}_{n}$ is the group of symmetric group.) and $\mathbb{Z}_{2}$ acts by sending a matrix to its transpose.
\end{theorem}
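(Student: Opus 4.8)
The plan is to reduce the computation of the identity component of the stabilizer to the known description of the full (abstract) stabilizer coming from Duffner's system of equations and Coelho's refinement, and then verify that the candidate group on the right-hand side is exactly the identity component. First I would observe that the candidate group $G := \Delta(\mathfrak{S}_{n}) \ltimes T(GL_{n}\times GL_{n}) \ltimes \mathbb{Z}_{2}$ is genuinely contained in the stabilizer of $P_{\pi}$: conjugation by a permutation matrix $\sigma$ sends $\prod_{i} x_{i\sigma(i)}$ around by the $\mathfrak{S}_{n}$-action and fixes each immanant because $\chi_{\pi}$ is a class function; left-right multiplication by diagonal matrices $(D_{1},D_{2})$ scales $P_{\pi}$ by $\det(D_{1})\det(D_{2})$, hence fixes it exactly when this product is $1$; and transposition fixes $P_{\pi}$ because $\chi_{\pi}(\sigma^{-1}) = \chi_{\pi}(\sigma)$ for the (real) characters of $\mathfrak{S}_{n}$. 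So $G \subseteq \mathrm{Stab}(P_{\pi})$, and since $G$ is connected (being $(T(GL_n\times GL_n))$, a torus, extended by finite groups — wait, it is not connected), I would instead isolate the identity component $G^{\circ} = T(GL_{n}\times GL_{n})$, which is a connected torus of dimension $2n-1$, and show $\mathrm{Stab}(P_{\pi})^{\circ} = T(GL_{n}\times GL_{n})$.

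Next I would establish the reverse inclusion $\mathrm{Stab}(P_{\pi})^{\circ} \subseteq T(GL_{n}\times GL_{n})$. Here I would use the representation-theoretic setup of the Remark: $\mathrm{Stab}(P_{\pi})$ is an algebraic subgroup of $GL(E\otimes F)$, and its identity component is connected. The key external input is Duffner's theorem \cite{MR1275631}, which (for $n \geq 4$, $\pi \neq (1,\dots,1),(n)$) characterizes the linear maps $T$ on $n\times n$ matrices with $P_{\pi}\circ T = P_{\pi}$ as being of the form $X \mapsto \tau_{1} D_{1} X D_{2} \tau_{2}$ or the transpose of such, subject to the system of equations on the diagonal matrices $D_{1},D_{2}$ and permutations $\tau_{1},\tau_{2}$; combined with Coelho's result \cite{MR1412753} that any solution forces $\tau_{1} = \tau_{2}$, every element of $\mathrm{Stab}(P_{\pi})$ (up to scalar) lies in $\Delta(\mathfrak{S}_{n}) \ltimes (\text{diagonal pairs}) \ltimes \mathbb{Z}_{2}$. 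Passing to the subgroup that literally fixes $P_{\pi}$ (not just up to scalar) imposes the determinant condition, giving $\mathrm{Stab}(P_{\pi}) \subseteq G$. Then I would take the identity component: the transpose component and each nontrivial $\Delta(\sigma)$-coset are disjoint from the identity component (they are distinct cosets of the normal torus), so $\mathrm{Stab}(P_{\pi})^{\circ} \subseteq T(GL_{n}\times GL_{n})$, and with the first inclusion this forces equality.

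The one genuine subtlety, and where I expect the real work to lie, is checking that the \emph{scalar-preservation} subtlety and the \emph{exclusion of $\pi = (4,1,1,1)$ and $n \geq 6$} are exactly the hypotheses under which Duffner--Coelho apply cleanly: for small $n$ or for the partition $(4,1,1,1)$ the stabilizer is genuinely larger (there are extra linear symmetries not of the diagonal-monomial type), so I must confirm the cited theorems give a \emph{clean} characterization precisely in the stated range and that no sporadic extra component of the stabilizer survives. Concretely, the hard part will be translating Duffner's ``system of equations'' into the statement that the only positive-dimensional connected subgroups of solutions form the torus $T(GL_n\times GL_n)$ — i.e.\ ruling out that some family of permutation-plus-diagonal solutions could assemble into a larger connected algebraic group. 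I would handle this by noting that the permutation part $\tau$ varies in a finite set, so any connected subgroup of $\mathrm{Stab}(P_{\pi})$ must have constant (hence trivial) $\tau$ and trivial transpose-component, landing inside the diagonal pairs; intersecting with $SL$-type determinant condition then yields exactly $T(GL_{n}\times GL_{n})$. Finally I would record that $G$ itself (with the finite extensions) is the full stabilizer, so the identity component is as claimed, completing the proof.
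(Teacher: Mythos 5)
Your proposal correctly identifies the external inputs (Duffner's characterization and Coelho's $\tau_1=\tau_2$), but it misstates what Duffner's theorem actually provides, and in doing so it assumes away the entire content of the paper's proof. Duffner's theorem does \emph{not} say that a stabilizing map has the form $X\mapsto \tau_1 D_1 X D_2\tau_2$ with $D_1,D_2$ diagonal matrices acting by left and right multiplication. It says the map has the form $X\mapsto \tau_1(C\ast X)\tau_2$ (possibly composed with transpose), where $C\ast X$ is the \emph{entrywise} (Hadamard) scaling by an arbitrary matrix $C=(c_{ij})$ of nonzero entries subject to the character equations $\chi_\pi(\sigma)\prod_i c_{i\sigma(i)}=\chi_\pi(\tau_2\sigma\tau_1^{-1})$. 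That is, the a priori torus is the full $n^2$-dimensional torus $T(GL(E\otimes F))$, not the $(2n-1)$-dimensional torus of pairs of diagonal matrices. After Coelho's reduction to $\tau_1=\tau_2$, the system becomes $\prod_i c_{i\sigma(i)}=1$ for all $\sigma$ with $\chi_\pi(\sigma)\ne 0$, and the real work of the proof is to show that the identity component of the solution set of \emph{this restricted} system is no larger than the set of matrices of the form $c_{ij}=a_ib_j$ with $\prod_i a_i\prod_j b_j=1$. Your closing remark that one can just ``intersect with the $SL$-type determinant condition'' skips exactly this step: if one imposed the equations for \emph{all} $\sigma$, Lemma \ref{maxrank} (permutation matrices span a space of dimension $(n-1)^2+1$) would give the dimension count $2n-2$ immediately, but the restriction to $\sigma$ with $\chi_\pi(\sigma)\ne 0$ could in principle enlarge the solution variety, which is why the paper carries out the Lie algebra computation of Lemma \ref{lemma3} using specific families of permutations with nonvanishing character.

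This is also why your treatment of the excluded partition $(4,1,1,1)$ is off target: the exclusion does not come from the range of validity of Duffner--Coelho (those results hold for all $\pi\ne(1,\dots,1),(n)$ in the stated range of $n$), but from the paper's own Lemma \ref{lemma4}, which requires a permutation $\tau$ with $\chi_\pi(\tau)\ne 0$ containing a cycle moving at least $4$ elements and fixing at least one element; such $\tau$ fails to exist precisely for the hooks $(3,1,1,1)$, $(4,1,1)$, $(4,1,1,1)$, and the first two are then recovered via Theorem \ref{theorem2} since they are non-symmetric. The parts of your argument that are correct --- the containment of the candidate group in the stabilizer, and the observation that the finite permutation and transpose data must be constant on a connected subgroup --- are the easy parts; the proposal as written has a genuine gap at the step that constitutes the theorem's actual difficulty.
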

\begin{remark}
It is well-known that Theorem \ref{theorem1} is true for permanent as well, but our proof does not recover this case.
\end{remark}
\begin{theorem}\label{theorem2}
Let $n\ge 5$ and let $\pi$ be a partition of $n$ which is not symmetric, that is, $\pi$ is not equal to its transpose, and $\pi\ne (1,...,1)$ or $(n)$, then the stabilizer of the immanant is $\Delta(\mathfrak{S}_{n})\ltimes T(GL_{n}\times GL_{n})\ltimes \mathbb{Z}_{2}$.
\end{theorem}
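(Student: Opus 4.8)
The plan is to show that the stabilizer $\mathrm{Stab}(P_\pi)=\{g\in GL(E\otimes F):g\cdot P_\pi=P_\pi\}$ coincides with the group $H:=\Delta(\mathfrak{S}_n)\ltimes T(GL_n\times GL_n)\ltimes\mathbb{Z}_2$, realised inside $GL(E)\times GL(F)\rtimes\mathbb{Z}_2\subset GL(E\otimes F)$ as the subgroup generated by the conjugations by permutation matrices, the left/right multiplications by pairs of diagonal matrices whose determinants multiply to $1$, and the transpose. The inclusion $H\subseteq\mathrm{Stab}(P_\pi)$ is a direct verification on matrix coordinates: identifying $E\otimes F$ with $M_n(\mathbb{C})$, one has $P_\pi(P_\tau XP_\tau^{-1})=P_\pi(X)$ and $P_\pi(X^{\mathsf{T}})=P_\pi(X)$ because $\chi_\pi$ is a real-valued class function, while $P_\pi(D_1XD_2)=\det(D_1)\det(D_2)P_\pi(X)$, which equals $P_\pi(X)$ precisely because of the determinant-one condition defining $T(GL_n\times GL_n)$. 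Thus the entire content is the reverse inclusion $\mathrm{Stab}(P_\pi)\subseteq H$.

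For that I would pass to the associated linear preserver problem: under the identification $E\otimes F\cong M_n(\mathbb{C})$, an element $g\in\mathrm{Stab}(P_\pi)$ is a linear automorphism $T$ of $M_n(\mathbb{C})$ with $P_\pi\circ T=P_\pi$. By Duffner's theorem \cite{MR1275631}, valid for $n\ge 4$ and $\pi\ne(1,\dots,1),(n)$, every such $T$ is a solution of an explicit system of equations, and every solution has the form $T(X)=D_1P_{\tau_1}X^{\varepsilon}P_{\tau_2}D_2$ for suitable invertible diagonal $D_1,D_2$, permutations $\tau_1,\tau_2\in\mathfrak{S}_n$ and $X^{\varepsilon}\in\{X,X^{\mathsf{T}}\}$ meeting the constraints of the system; by the refinement of Coelho and Purifica{\c{c}}{\~a}o \cite{MR1412753}, solvability forces $\tau_1=\tau_2=:\tau$. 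Conjugating the two diagonal factors through $P_\tau$ then exhibits $T$ as a conjugation by a permutation matrix composed with left/right multiplication by diagonal matrices and possibly a transpose; and since $T$ is required to preserve $P_\pi$ on the nose (not only up to scalar), the identity $P_\pi(D_1XD_2)=\det(D_1)\det(D_2)P_\pi(X)$ forces the product of the two diagonal determinants to be $1$. This says exactly that $T\in H$, so together with the first paragraph it gives $\mathrm{Stab}(P_\pi)=H$.

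The delicate point — and the reason Theorem \ref{theorem2} is limited to non-symmetric $\pi$ — is that the Duffner--Coelho package, in the form above, is neither uniform in $n$ nor exception-free: for symmetric $\pi$ it needs $n\ge 6$, leaves the partition $(4,1,1,1)$ unresolved, and in general settles $\mathrm{Stab}(P_\pi)$ only partially, which is the weaker conclusion of Theorem \ref{theorem1}. The crucial extra input for non-symmetric $\pi$ is the sharp linear preserver theorem of Coelho, Purifica{\c{c}}{\~a}o and Duffner \cite{MR1674232}: for $n\ge 5$ and every $\pi\ne\pi^{\mathsf{T}}$ (other than $(1,\dots,1),(n)$) it determines the solution set of the defining system completely, with $\tau$ and $\varepsilon$ unconstrained and the diagonals subject only to the determinant-one relation. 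I expect this to be the main obstacle of the proof: one must translate the matrix-analytic conclusions of \cite{MR1674232} faithfully into the group-theoretic normal form above, and in particular rule out, for $n$ close to $5$, any sporadic preserver not of that form — for example one coming from an accidental coincidence among the values of $\chi_\pi$ on conjugacy classes — so that the full preserver set is literally $H$ and not merely a finite extension of its identity component.

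Assembling: by \cite{MR1674232}, for non-symmetric $\pi$ with $n\ge 5$ every $g\in\mathrm{Stab}(P_\pi)$ has the normal form $X\mapsto P_\tau D_1X^{\varepsilon}D_2P_\tau^{-1}$ with $\det(D_1)\det(D_2)=1$, hence $g\in H$; with $H\subseteq\mathrm{Stab}(P_\pi)$ from the first paragraph this yields $\mathrm{Stab}(P_\pi)=H$ and proves Theorem \ref{theorem2}. I would close by recording why each hypothesis is needed: dropping $(1,\dots,1)$ and $(n)$ discards the determinant and permanent, whose stabilizers are strictly larger and lie outside the reach of the immanant linear preserver theorems; $n\ge 5$ is the range of \cite{MR1674232}; and the non-symmetry of $\pi$ is exactly what makes the exception-free, full-stabilizer form of the classification available, which is what promotes the partial conclusion of Theorem \ref{theorem1} to the equality of groups asserted here.
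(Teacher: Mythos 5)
Your overall pipeline (easy inclusion $H\subseteq G(P_{\pi})$; Duffner's classification of the preservers as $T(X)=C\ast(P_{\tau_{1}}X^{\varepsilon}P_{\tau_{2}})$ subject to a character system; Coelho's refinement forcing $\tau_{1}=\tau_{2}$) matches the paper's, but there is a genuine gap at the one step that carries all the content. In the second paragraph you state that Duffner's theorem already puts $T$ in the form $D_{1}P_{\tau_{1}}X^{\varepsilon}P_{\tau_{2}}D_{2}$ with $D_{1},D_{2}$ diagonal. It does not: it only places the multiplier in the full $n^{2}$-dimensional torus of $GL(E\otimes F)$, i.e.\ $T(X)$ involves an entrywise (Hadamard) multiplication by an arbitrary matrix $C=(c_{ij})$ of nonzero scalars constrained by $\chi_{\pi}(\sigma)\prod_{i}c_{i\sigma(i)}=\chi_{\pi}(\tau\sigma\tau^{-1})$, which after $\tau_{1}=\tau_{2}$ becomes the multiplicative system $\prod_{i}c_{i\sigma(i)}=1$ for all $\sigma$ with $\chi_{\pi}(\sigma)\ne 0$ (equation (\ref{E4})). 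The theorem to be proved is precisely that for non-symmetric $\pi$ this system forces $C$ to be rank one, $c_{ij}=a_{i}b_{j}$ with $\prod_{i}a_{i}\prod_{j}b_{j}=1$, so that $C\ast X=AXB$ lands in $T(GL_{n}\times GL_{n})$. You defer exactly this step to the ``sharp linear preserver theorem'' of \cite{MR1674232}, but what that reference supplies (and what the paper actually quotes from it) are only two character-existence statements, Propositions \ref{3.1} and \ref{3.2}: for non-symmetric $\pi$ and $n\ge 5$ there exist permutations $\tau$ with $\chi_{\pi}(\tau)\ne 0\ne\chi_{\pi}((ij)\tau)$ and $\sigma_{0}$ with $\chi_{\pi}((i_{1}\dots i_{p})\sigma_{0})\ne 0\ne\chi_{\pi}((1i_{1}\dots i_{p})\sigma_{0})$. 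Citing the classification itself as a black box both begs the question and misattributes the result the paper claims as its own.

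The missing argument is short but essential, and is the paper's Lemma \ref{prop}: dividing the instance of (\ref{E4}) for $(i_{1}\dots i_{p})\sigma_{0}$ by the instance for $(1i_{1}\dots i_{p})\sigma_{0}$ yields $c_{ij}=c_{i1}c_{1j}/c_{11}$ for $i\ne j$, and comparing $\tau$ with $(ij)\tau$ yields the same identity for the diagonal entries $c_{ii}$; this exhibits $C$ as the rank-one matrix of the pair $(A,B)$ with $a_{i}=c_{i1}$, $b_{j}=c_{1j}/c_{11}$, and the normalization $\det(A)\det(B)=1$ follows from $\prod_{i}c_{ii\phantom{.}}\!=1$ applied to any $\sigma$ in the identity class (or any $\sigma$ with $\chi_{\pi}(\sigma)\ne0$). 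Without some such derivation your proof establishes only the containment of the identity component, i.e.\ the weaker conclusion of Theorem \ref{theorem1}, not the equality of groups asserted in Theorem \ref{theorem2}.
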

\begin{remark}One can compute directly from the system of equations determined by Duffner, M. Ant{\'o}nia for the case $n=4$ and $\pi=(2,2)$ and see that in this case, Theorem \ref{theorem2} fails, since there will be many additional components. For example,
\[
C=\begin{pmatrix}
  e&-e&-e&e\\
  1&1&1&1\\
  1&1&-1&-1\\
  \frac{1}{e}&-\frac{1}{e}&\frac{1}{e}&-\frac{1}{e}
\end{pmatrix}
\] stabilizes the immanant $P_{(2,2)}$, but it's not in the identity component.
\end{remark}
\section{\textbf{Notations and preliminaries}}
\begin{enumerate}
\item $E$ and $F$ are $n$-dimensional complex vector spaces.
\item $\lbrace e_{i}\rbrace_{i=1}^{n}$ and $
    \lbrace f_{i}\rbrace_{i=1}^{n}$ are fixed basis of $E$ and $F$, respectively.
\item $\mathfrak{S}_{n}$ is the symmetric group on $n$ elements. Given $\sigma\in \mathfrak{S}_{n}$, we can express $\sigma$ as disjoint product of cycles, and we can denote the conjugacy class of $\sigma$ by $(1^{i_{1}}2^{i_{2}}...n^{i_{n}})$, meaning that $\sigma$ is a disjoint product of $i_{1}$ $1$-cycles, $i_{2}$ $2$-cycles,..., $i_{n}$ $n$-cycles. Sometimes we might use $(k_{1},k_{2},...,k_{p})$ (where $n\ge k_{1}\ge k_{2}\ge ...\ge k_{p}\ge 1$ and $\sum_{i=1}^{p}k_{i}=n$) to indicate the cycle type of $\sigma$. This notation means that $\sigma$ contains a $k_{1}$-cycle, a $k_{2}$-cycle $\dots$ and a $k_{p}$-cycle.
\item $\pi \vdash{n}$ is a partition of $n$, we can write $\pi=(\pi_{1},...,\pi_{k})$ with $\pi_{1}\ge...\ge \pi_{k}$.
\item $\pi^{'}$ is the conjugate partition of $\pi$.
\item $M_{\pi}$ (sometimes we might use $[\pi]$ as well) is the irreducible representation of the symmetric group $\mathfrak{S}_{n}$ corresponding to the partition $\pi$.
\item $S_{\pi}(E)$ is the irreducible representation of $GL(E)$ corresponding to $\pi$.
\item $\chi_{\pi}$ is the character of $M_{\pi}$.
\item Let $V$ be a representation of $SL(E)$, then the weight-zero-subspace of $V$ is denoted as $V_{0}$.
\item For $S_{\pi}(E)$, we have a realization in the tensor product $E^{\otimes n}$ via the young symmetrizer $c_{\overline \pi}$\;:\;$S_{\pi}(E)\cong c_{\overline \pi}(E^{\otimes n})$,where $\overline \pi$ is any young tableau of shape $\pi$.
\item The action of $\mathfrak{S}_{n}$ on $E$ is the action of the Weyl group of $SL(E)$, that is, the permutation representation on $\mathbb{C}^{n}$.
\item Fix a young tableau $\overline \pi$, define $P_{\bar \pi}:=\{\sigma\in \mathfrak{S}_{n}\mid \sigma \text{ preserves each row of}\; \bar \pi\}$
and $Q_{\bar \pi}:=\{\sigma\in \mathfrak{S}_{n}\mid \sigma \text{ preserves each column of}\; \bar \pi\}$
\item For a polynomial $P$ in variables $(x_{ij})_{n\times n}$, denote the stabilizer of $P$ in $GL(E\otimes F)$ by $G(P)$.
\end{enumerate}
\section{\textbf{The description of immanants as modules}}
Consider the action of $T(E)\times T(F)$ on immanants, where $T(E)$,$T(F)$ are maximal tori (diagonal matrices) of $SL(E)$, $SL(F)$, respectively. For any $(A,B)\in T(E)\times T(F)$,
\[\mathbf{A}=
\begin{pmatrix}a_{1}&0&\dots&0\\
      0&a_{2}&\dots&0\\
      \vdots&\vdots&\ddots&\vdots\\
      0&0&\dots&a_{n}
      \end{pmatrix},
\mathbf{B}=\begin{pmatrix}b_{1}&0&\dots&0\\
      0&b_{2}&\dots&0\\
      \vdots&\vdots&\ddots&\vdots\\
      0&0&\dots&b_{n}
      \end{pmatrix}
\]
For the immanant $P_{\pi}=\sum_{\sigma\in \mathfrak{S}_{n}}\chi_{\pi}(\sigma)\prod_{i=1}^{n}x_{i\sigma(i)}$, the action of $(A,B)$ on $P_{\pi}$ is given by
\[(A,B).P_{\pi}:=\sum_{\sigma\in \mathfrak{S}_{n}}\chi_{\pi}(\sigma)\prod_{i=1}^{n}a_{i}b_{\sigma(i)}x_{i\sigma(i)}=P_{\pi}.
\]
That is, immanants are in the $SL(E)\times SL(F)$ weight-zero-subspace of $S^{n}(E\otimes F)$. On the other hand, the decomposition of $S^{n}(E\otimes F)$ as $GL(E)\times GL(F)$-modules:
\[
S^{n}(E\otimes F)=\sum_{\lambda \vdash n}S_{\lambda}(E)\otimes S_{\lambda}(F).
\]
Accordingly, we have a decomposition of the weight-zero-subspace:
\[
(S^{n}(E\otimes F))_{0}=\sum_{\lambda \vdash n}(S_{\lambda}(E))_{0}\otimes (S_{\lambda}(F))_{0}.
\]
\begin{proposition}
For $\lambda\vdash n$, $(S_{\lambda}(E))_{0}\cong M_{\lambda}$ as $\mathfrak{S}_{n}$-modules.
\end{proposition}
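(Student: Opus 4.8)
The plan is to carry out the whole argument inside $E^{\otimes n}$, using the realization $S_\lambda(E)\cong c_{\bar\lambda}(E^{\otimes n})$ and the Weyl-group action of $\mathfrak{S}_n$ on $E$ (hence on $E^{\otimes n}$) from Section~2, and to recognize the relevant weight space as a copy of the regular representation $\mathbb{C}[\mathfrak{S}_n]$.

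First I would identify $(E^{\otimes n})_0$. Since $\lambda\vdash n$, every $GL(E)$-weight occurring in $E^{\otimes n}$ is an $n$-tuple of nonnegative integers summing to $n$, and such a weight is trivial on the maximal torus of $SL(E)$ exactly when it equals $(1,1,\dots,1)$; hence $(E^{\otimes n})_0$ has basis $\{\,v_\sigma:=e_{\sigma(1)}\otimes\cdots\otimes e_{\sigma(n)}\;:\;\sigma\in\mathfrak{S}_n\,\}$, a space of dimension $n!$. The place-permutation (Schur--Weyl) action of $\mathfrak{S}_n$, which is the action through which the Young symmetrizer $c_{\bar\lambda}\in\mathbb{C}[\mathfrak{S}_n]$ operates, permutes this basis simply transitively, while the Weyl action sends $v_\sigma\mapsto v_{w\sigma}$. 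Consequently $\sigma\mapsto v_\sigma$ is an isomorphism $\mathbb{C}[\mathfrak{S}_n]\cong(E^{\otimes n})_0$ intertwining the Weyl action with left translation and the place-permutation action with right translation (by the inverse); in particular the two actions commute, as they must since permutation matrices lie in $GL(E)$ and therefore commute with place permutations on $E^{\otimes n}$.

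Next I would pass from $E^{\otimes n}$ to $S_\lambda(E)=c_{\bar\lambda}(E^{\otimes n})$. For the same reason, $c_{\bar\lambda}$ commutes with the Weyl action, so the Weyl group preserves $c_{\bar\lambda}(E^{\otimes n})$; and since $c_{\bar\lambda}$, being a place-permutation operator, commutes with $GL(E)$ and in particular preserves torus weight spaces, $(S_\lambda(E))_0=c_{\bar\lambda}\bigl((E^{\otimes n})_0\bigr)$. Transporting through the isomorphism of the previous paragraph, $c_{\bar\lambda}$ acts on $\mathbb{C}[\mathfrak{S}_n]$ by right multiplication by the image $\tilde c_{\bar\lambda}$ of $c_{\bar\lambda}$ under the linear anti-involution $\sum_g a_g g\mapsto\sum_g a_g g^{-1}$; thus $(S_\lambda(E))_0$ is identified with the left ideal $\mathbb{C}[\mathfrak{S}_n]\,\tilde c_{\bar\lambda}$, on which $\mathfrak{S}_n$ (the Weyl group) acts by left translation. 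Writing $c_{\bar\lambda}=a_{\bar\lambda}b_{\bar\lambda}$ with $a_{\bar\lambda}=\sum_{p\in P_{\bar\lambda}}p$ the row symmetrizer and $b_{\bar\lambda}=\sum_{q\in Q_{\bar\lambda}}\mathrm{sgn}(q)\,q$ the column antisymmetrizer, and using that $a_{\bar\lambda}$ and $b_{\bar\lambda}$ are each fixed by that anti-involution, one gets $\tilde c_{\bar\lambda}=b_{\bar\lambda}a_{\bar\lambda}$, again a Young symmetrizer attached to the tableau $\bar\lambda$.

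Finally I would invoke the classical structure theory of Young symmetrizers: for any tableau $\bar\lambda$ of shape $\lambda$, the left ideal of $\mathbb{C}[\mathfrak{S}_n]$ generated by either $a_{\bar\lambda}b_{\bar\lambda}$ or $b_{\bar\lambda}a_{\bar\lambda}$ is a minimal left ideal affording $M_\lambda$. This finishes the proof. There is no deep obstacle here; the only care needed is bookkeeping --- keeping left and right actions straight, tracking $\sigma$ against $\sigma^{-1}$ through the identification, and checking that $b_{\bar\lambda}a_{\bar\lambda}$ gives $M_\lambda$ and not $M_{\lambda'}$ (it does: passing to $\lambda'$ interchanges the roles of $P_{\bar\lambda}$ and $Q_{\bar\lambda}$, rather than the order of the two factors). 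As a numerical sanity check, $\dim(S_\lambda(E))_0$ equals the Kostka number $K_{\lambda,(1^n)}$, i.e.\ the number of standard Young tableaux of shape $\lambda$, which is $\dim M_\lambda$; one could alternatively prove the statement by computing the trace of a permutation on $(S_\lambda(E))_0$ and matching it with $\chi_\lambda$ via the identity $p_\mu=\sum_\lambda\chi_\lambda(\mu)\,s_\lambda$ expressing power sums in terms of Schur functions, but the module-theoretic route above is shorter and uses exactly the apparatus set up in Section~2.
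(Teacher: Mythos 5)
Your argument is correct, but note that the paper does not actually prove this proposition --- it simply cites \cite{MR2265844}, so there is no internal proof to compare against. What you have written is the standard argument that the cited reference (and e.g.\ Fulton--Harris) gives: the zero weight space $(E^{\otimes n})_0$ is a copy of the regular representation $\mathbb{C}[\mathfrak{S}_n]$ on which the Weyl action is left translation and the place-permutation action is right translation by the inverse; since $c_{\bar\lambda}$ commutes with $GL(E)$ it preserves weight spaces, so $(S_\lambda(E))_0=c_{\bar\lambda}\bigl((E^{\otimes n})_0\bigr)$ corresponds to the left ideal $\mathbb{C}[\mathfrak{S}_n]\,b_{\bar\lambda}a_{\bar\lambda}$, which affords $M_\lambda$. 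All the steps check out, including the two points that most often go wrong: the identification $(S_\lambda(E))_0=c_{\bar\lambda}((E^{\otimes n})_0)$ really does require the observation that $c_{\bar\lambda}$ preserves the weight decomposition (you make it), and the reversed product $b_{\bar\lambda}a_{\bar\lambda}$ generates $M_\lambda$ rather than $M_{\lambda'}$ (you flag and correctly resolve this; the fully rigorous justification is the standard fact that $\mathbb{C}[\mathfrak{S}_n]\,a_{\bar\lambda}b_{\bar\lambda}$ and $\mathbb{C}[\mathfrak{S}_n]\,b_{\bar\lambda}a_{\bar\lambda}$ are isomorphic minimal left ideals, e.g.\ because right multiplication by $a_{\bar\lambda}$ gives a nonzero, hence injective, map from the first onto an irreducible submodule of the second, and a dimension count or the reverse map by right multiplication by $b_{\bar\lambda}$ finishes it). Your closing dimension check via the Kostka number $K_{\lambda,(1^n)}$ is a useful sanity check but not needed. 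One could object only that you lean on "classical structure theory" for the last step, but that is exactly the level of rigor at which the paper itself operates; your write-up has the advantage of being self-contained in the notation of Section~2 and of making explicit the left/right bookkeeping that the paper's later proof of Proposition~3.6 implicitly relies on.
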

\begin{proof}
See \cite{MR2265844} page 272.
\end{proof}
Thus we can identify $(S_{\lambda}(E))_{0}\otimes (S_{\lambda}(F))_{0}$ with $M_{\lambda}\otimes M_{\lambda}$ as an $\mathfrak{S}_{n}\times \mathfrak{S}_{n}$ module. Also, the diagonal $\Delta(\mathfrak{S}_{n})$ of $\mathfrak{S}_{n}\times \mathfrak{S}_{n}$ is isomorphic to $\mathfrak{S}_{n}$, so $M_{\lambda}\otimes M_{\lambda}$ is a $\mathfrak{S}^{n}$-module. $M_{\lambda}\otimes M_{\lambda}$ is an irreducible $\mathfrak{S}_{n}\times \mathfrak{S}_{n}$ module, but it is reducible as and $\mathfrak{S}_{n}$-module, so that we can decompose it.Consider the action of $\mathfrak{S}^{n}$ on $S^{n}(E\otimes F)$, let $\sigma\in \mathfrak{S}^{n}$, then:
\[
\sigma.x_{ij}=x_{\sigma(i)\sigma(j)}.
\]
So immanants are invariant under the action of $\mathfrak{S}^{n}$, hence are contained in the isotypic component of the trivial $\mathfrak{S}_{n}$ representation of $\bigoplus_{\lambda \vdash n}(S_{\lambda}(E))_{0}\otimes (S_{\lambda}(F))_{0}$=$\bigoplus_{\lambda \vdash n}M_{\lambda}\otimes M_{\lambda}$.
\begin{proposition}
As a $\mathfrak{S}_{n}$ module, $M_{\lambda}\otimes M_{\lambda}$ contains only one copy of trivial representation.
\end{proposition}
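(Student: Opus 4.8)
The plan is to reduce the statement to a one-line character computation. Recall that for any finite-dimensional $\mathfrak{S}_n$-module $V$ with character $\chi_V$, the multiplicity of the trivial representation in $V$ equals $\langle \chi_V,\mathbf{1}\rangle=\frac{1}{n!}\sum_{\sigma\in\mathfrak{S}_n}\chi_V(\sigma)=\dim V^{\mathfrak{S}_n}$. So everything comes down to identifying the character of $M_\lambda\otimes M_\lambda$ as a module over the diagonal $\Delta(\mathfrak{S}_n)$ and pairing it with the trivial character.

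First I would observe that, since the relevant $\mathfrak{S}_n$-action on $M_\lambda\otimes M_\lambda$ is the one obtained by restricting the natural $\mathfrak{S}_n\times\mathfrak{S}_n$-action along the diagonal, its character is $\sigma\mapsto\chi_\lambda(\sigma)^2$. Therefore the multiplicity of the trivial representation is $\frac{1}{n!}\sum_{\sigma\in\mathfrak{S}_n}\chi_\lambda(\sigma)^2=\langle\chi_\lambda,\chi_\lambda\rangle$, which equals $1$ because $M_\lambda$ is irreducible and the irreducible characters of $\mathfrak{S}_n$ are orthonormal. This already proves the proposition.

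Equivalently, and a bit more conceptually, I would invoke that every irreducible $\mathfrak{S}_n$-module is self-dual (its character $\chi_\lambda$ is real, in fact $\mathbb{Z}$-valued), so that $\operatorname{Hom}_{\mathfrak{S}_n}(\mathbf{1},M_\lambda\otimes M_\lambda)\cong\operatorname{Hom}_{\mathfrak{S}_n}(M_\lambda^{\ast},M_\lambda)\cong\operatorname{Hom}_{\mathfrak{S}_n}(M_\lambda,M_\lambda)$, which is one-dimensional by Schur's lemma. There is essentially no obstacle in this argument; the only point deserving a word of care is the appeal to self-duality of $M_\lambda$ (equivalently, the reality of the characters of $\mathfrak{S}_n$), which is precisely what guarantees that the invariants of $M_\lambda\otimes M_\lambda$ — rather than those of $M_\lambda\otimes M_\lambda^{\ast}$ — form a one-dimensional space. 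It would then be natural to note, as the application in the next section seems to want, that this unique trivial summand inside $\bigoplus_{\lambda\vdash n}M_\lambda\otimes M_\lambda$ is spanned by the immanant $P_\lambda$ itself.
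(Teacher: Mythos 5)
Your proof is correct and is essentially the same as the paper's: both compute the multiplicity of the trivial representation as $\frac{1}{n!}\sum_{\sigma\in\mathfrak{S}_{n}}\chi_{\lambda}(\sigma)^{2}=(\chi_{\lambda},\chi_{\lambda})=1$ using orthonormality of irreducible characters. Your added remark on self-duality (reality of $\chi_{\lambda}$) is a nice explicit acknowledgment of a point the paper uses implicitly, but it does not change the approach.
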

\begin{proof}
Denote the character of $\sigma\in \mathfrak{S}_{n}$ on $M_{\lambda}\otimes M_{\lambda}$ by $\chi(\sigma)$, and let $\chi_{trivial}$ be the character of the trivial representation. From the general theory of characters, it suffices to show that the inner product $(\chi,\chi_{trivial})=1$. First, the character $\chi(\sigma,\tau)$ of $(\sigma,\tau) \in \mathfrak{S}_{n}\times \mathfrak{S}_{n}$ on the module $M_{\lambda}\otimes M_{\lambda}$ is $\chi_{\lambda}(\sigma)\chi_{\lambda}(\tau)$. So in particular, the character $\chi$ of $M_{\lambda}\otimes M_{\lambda}$ on $\sigma$ is $(\chi_{\lambda}(\sigma))^{2}$. Next,
\[
\begin{aligned}
(\chi,\chi_{trivial})=&\frac{1}{n!}(\sum_{\sigma\in \mathfrak{S}_{n}}\chi(\sigma)\overline{\chi_{trivial}(\sigma)})\\
=&\frac{1}{n!}\sum_{\sigma\in \mathfrak{S}_{n}}(\chi_{\lambda}(\sigma))^{2}\\
=&(\chi_{\lambda},\chi_{\lambda})\\
=&1
\end{aligned}
\]
since $\chi_{trivial}(\sigma)=1,\forall\; \sigma\in \mathfrak{S}_{n}$. \end{proof}
By the above proposition, $M_{\lambda}\otimes M_{\lambda}=\mathbb{C}_{\lambda}\oplus \cdots$, where $\mathbb{C}_{\lambda}$ means the unique copy of trivial representation in $M_{\lambda}\otimes M_{\lambda}$, and dots means other components in this module. Hence
\[
P_{\pi}\in\bigoplus_{\lambda \vdash n}\mathbb{C}_{\lambda}.
\]
We can further locate immanants:
\begin{proposition}\label{proposition}
Let $P_{\pi}$ be the immanant associated to the partition $\pi\vdash n$. Assume $\mathbb{C}_{\pi}$ is the unique copy of the trivial $\mathfrak{S}_{n}$-representation contained in $(S_{\pi}(E))_{0}\otimes (S_{\pi}(F))_{0}$. Then:
$P_{\pi}\in\mathbb{C}_{\pi}.$
\end{proposition}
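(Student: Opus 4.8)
The plan is to turn everything into a statement about the group algebra $\mathbb{C}[\mathfrak{S}_n]$. The point is that, after the identifications already set up in this section, the immanant $P_\pi$ becomes (a nonzero scalar multiple of) the central primitive idempotent $e_\pi\in\mathbb{C}[\mathfrak{S}_n]$, and the line $\mathbb{C}_\pi$ becomes the line spanned by that idempotent.

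\emph{Step 1: identify $(S^n(E\otimes F))_0$ with $\mathbb{C}[\mathfrak{S}_n]$.} The monomials $m_\sigma:=\prod_{i=1}^{n}e_i\otimes f_{\sigma(i)}$, for $\sigma\in\mathfrak{S}_n$, form a basis of the weight-zero subspace: a degree-$n$ monomial in the vectors $e_i\otimes f_j$ has $SL(E)\times SL(F)$-weight zero exactly when every $e_i$ occurs once among the left factors and every $f_j$ once among the right factors, and these monomials are precisely the $m_\sigma$. A one-line computation gives $(\alpha,\beta)\cdot m_\sigma=m_{\beta\sigma\alpha^{-1}}$, so $m_\sigma\mapsto\sigma$ is an isomorphism of $\mathfrak{S}_n\times\mathfrak{S}_n$-modules from $(S^n(E\otimes F))_0$ onto $\mathbb{C}[\mathfrak{S}_n]$ with the two-sided regular action $(\alpha,\beta)\cdot x=\beta x\alpha^{-1}$; under it the diagonal $\Delta(\mathfrak{S}_n)$ acts by conjugation, consistent with the formula $\sigma.x_{ij}=x_{\sigma(i)\sigma(j)}$. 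By Wedderburn's theorem $\mathbb{C}[\mathfrak{S}_n]=\bigoplus_\lambda B_\lambda$, where $B_\lambda\cong\operatorname{End}(M_\lambda)\cong M_\lambda\otimes M_\lambda^{*}$ is the minimal two-sided ideal containing the central primitive idempotent $e_\lambda$. Since $M_\lambda$ is self-dual and the external tensors $M_\lambda\boxtimes M_\lambda$ are pairwise non-isomorphic irreducibles, comparing this with the decomposition $(S^n(E\otimes F))_0=\bigoplus_\lambda(S_\lambda(E))_0\otimes(S_\lambda(F))_0\cong\bigoplus_\lambda M_\lambda\otimes M_\lambda$ (using the Proposition above that $(S_\lambda(E))_0\cong M_\lambda$) and invoking uniqueness of isotypic components forces $(S_\lambda(E))_0\otimes(S_\lambda(F))_0$ to correspond to $B_\lambda$. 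Under this matching the trivial summand $\mathbb{C}_\lambda$ corresponds to the line of $\Delta(\mathfrak{S}_n)$-invariants of $B_\lambda$, i.e.\ to $\operatorname{End}_{\mathfrak{S}_n}(M_\lambda)=\mathbb{C}\cdot\mathrm{id}_{M_\lambda}=\mathbb{C}\cdot e_\lambda$ by Schur's lemma.

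\emph{Step 2: identify the immanant.} Under $m_\sigma\mapsto\sigma$ the immanant $P_\pi=\sum_\sigma\chi_\pi(\sigma)m_\sigma$ becomes $z_\pi:=\sum_\sigma\chi_\pi(\sigma)\sigma$. As $\chi_\pi$ is a class function, $z_\pi$ is central, and the scalar by which it acts on $M_\mu$ is $\frac{1}{\dim M_\mu}\sum_\sigma\chi_\pi(\sigma)\chi_\mu(\sigma)=\frac{n!}{\dim M_\mu}\langle\chi_\pi,\chi_\mu\rangle=\frac{n!}{\dim M_\pi}\delta_{\pi\mu}$, where I use $\chi_\mu(\sigma)=\chi_\mu(\sigma^{-1})$ (characters of $\mathfrak{S}_n$ are real) and orthogonality of irreducible characters. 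Hence $z_\pi=\frac{n!}{\dim M_\pi}e_\pi\in\mathbb{C}\cdot e_\pi$, and translating back gives $P_\pi\in\mathbb{C}_\pi$.

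I expect the only subtle point to be the bookkeeping in Step~1: confirming that the $GL(E)\times GL(F)$-isotypic block $(S_\lambda(E))_0\otimes(S_\lambda(F))_0$ is carried by the group-algebra isomorphism onto the Wedderburn block $B_\lambda$ with the \emph{same} $\lambda$, rather than some $B_\mu$. This is forced because both modules are multiplicity-free as $\mathfrak{S}_n\times\mathfrak{S}_n$-modules with the identical constituents $M_\lambda\boxtimes M_\lambda\cong M_\lambda\boxtimes M_\lambda^{*}$, but it should be spelled out with care (using self-duality of $M_\lambda$ and the Proposition cited above). Everything else is a short, routine computation.
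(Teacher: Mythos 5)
Your argument is correct, and it is a genuinely different route from the one in the paper. The paper works with the explicit tensor expression of $P_{\pi}$ and shows $P_{\pi}\in S_{\pi}(E)\otimes S_{\pi}(F)$ by proving that $1\otimes c_{\bar\lambda}$ annihilates $P_{\pi}$ for every Young symmetrizer $c_{\bar\lambda}$ of shape $\lambda\ne\pi$; the computation reduces to the vanishing of the trace of $\gamma\cdot c_{\bar\lambda}$ on the copy of $M_{\pi}$ realized as $\mathbb{C}\mathfrak{S}_{n}\cdot c_{\bar\pi}$, which follows from $c_{\bar\lambda}\cdot c_{\bar\pi}=0$ for distinct shapes; it then falls back on the $\Delta(\mathfrak{S}_{n})$-invariance of $P_{\pi}$ (already established) to land in $\mathbb{C}_{\pi}$. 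You instead identify the entire weight-zero space with $\mathbb{C}[\mathfrak{S}_{n}]$ carrying the two-sided regular action, recognize $P_{\pi}$ as the central element $z_{\pi}=\sum_{\sigma}\chi_{\pi}(\sigma)\sigma=\frac{n!}{\dim M_{\pi}}e_{\pi}$, and match Wedderburn blocks with the $GL(E)\times GL(F)$-isotypic blocks by multiplicity-freeness and uniqueness of isotypic components. The subtle point you flag (that $(S_{\lambda}(E))_{0}\otimes(S_{\lambda}(F))_{0}$ must go to $B_{\lambda}$ with the same $\lambda$, using self-duality of $M_{\lambda}$) is exactly the right thing to worry about, and your resolution is sound. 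Your approach buys more: it identifies $\mathbb{C}_{\pi}$ concretely as $\mathbb{C}\cdot e_{\pi}$, avoids any computation with Young symmetrizers, and makes the remark following the proposition (that $P_{\pi}$ is the element of $\mathbb{C}_{\pi}$ with $P_{\pi}(\mathrm{Id})=\dim[\pi]$) transparent; the paper's proof is more self-contained at the level of explicit tensors but obscures why the normalization comes out as it does.
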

Before proving this proposition, we remark that it gives an equivalent definition of the immanant: $P_{\pi}$ is the element of the trivial representation $\mathbb{C}_{\pi}$ of $M_{\pi}\otimes M_{\pi}$ such that $P_{\pi}(Id)=dim([\pi])$. For more information about this definition, see for example, \cite{MR1771845}.
\begin{example}
If $\pi=(1,...,1)$, then $S_{\pi}E=\bigwedge^{n}E$, which is already a $1$ dimensional vector space. If $\pi=(n)$, then $S_{\pi}E=S^{n}E$, in which there's only one (up to scale) weight zero vector $e_{1}\circ\cdots\circ e_{n}$.
\end{example}
\begin{proof}Fix a partition $\pi\vdash n$, we want to show that the immanant $P_{\pi}$ is in $\mathbb{C}_{\pi}$, but we know that $P_{\pi}$ is in the weight-zero-subspace $(S^{n}(E\otimes F))_{0}=\oplus_{\lambda\vdash n}(S_{\lambda}(E))_{0}\otimes(S_{\lambda}(F))_{0}$.Since
$(S_{\lambda}(E))_{0}\otimes(S_{\lambda}(F))_{0}\subset S_{\lambda}(E)\otimes S_{\lambda}(F)$, it suffices to show that $P_{\pi}\in S_{\pi}(E)\otimes S_{\pi}(F)$. Then it suffices to show that for any young symmetrizer $c_{\bar\lambda}$ not of the shape $\pi$, $c_{\bar\lambda}\otimes c_{\bar\lambda}(P_{\pi})=0$. It suffices to check that $1\otimes c_{\bar\lambda}(P_{\pi})=0$, since $c_{\bar{\lambda}}\otimes c_{\bar\lambda}=(c_{\bar{\lambda}}\otimes 1)\circ (1\otimes c_{\bar\lambda})$. Express $P_{\pi}$ as an element in $S^{n}(E\otimes F)$:
\[
P_{\pi}=\sum_{\sigma\in \mathfrak{S}_{n}}\sum_{\tau\in \mathfrak{S}_{n}}\chi_{\pi}(\sigma)(\otimes_{i=1}^{n}e_{\tau(i)})\otimes(\otimes_{i=1}^{n}f_{\sigma\circ\tau(i)})
\]
The young symmetrizer $c_{\bar\lambda}=\sum_{p\in P_{\lambda},\;q\in Q_{\lambda}}sgn(q)\;p\;q$. So
\[
\begin{aligned}
1\otimes c_{\lambda}(P_{\pi})=&\sum_{\tau\in \mathfrak{S}_{n}}\otimes_{i=1}^{n}e_{\tau(i)}\otimes (\sum_{\substack {p\in P_{\lambda}
q\in Q_{\lambda}\\
\sigma\in \mathfrak{S}_{n}}}\chi_{\pi}(\sigma)sgn(q)\otimes_{i=1}^{n}f_{\sigma\cdot\tau\cdot q\cdot p(i)})
\end{aligned}
\]
In the above expression, $c_{\bar\lambda}$ acts on $f_{i}$'s. Now it suffices to show that:
\[
\sum_{\substack {p\in P_{\lambda}\\q\in Q_{\lambda}\\\sigma\in \mathfrak{S}_{n}}}\chi_{\pi}(\sigma)sgn(q)\bigotimes_{i=1}^{n}f_{\sigma\cdot\tau\cdot q\cdot p(i)}=0,\forall\; \tau\in \mathfrak{S}_{n}.
\]
For any $\tau\in \mathfrak{S}_{n}$,
\[
\begin{aligned}
\sum_{\substack {p\in P_{\lambda}\\q\in Q_{\lambda}\\\alpha\in \mathfrak{S}_{n}}}\chi_{\pi}(\alpha\cdot\tau^{-1})sgn(q)\bigotimes_{i=1}^{n}f_{\alpha\cdot q\cdot p(i)}=&\sum_{\gamma\in \mathfrak{S}_{n}}
\sum_{\substack {p\in P_{\lambda}\\q\in Q_{\lambda}\\
\alpha\in \mathfrak{S}_{n}\\
\alpha\cdot q\cdot p=\gamma
}}\chi_{\pi}(\alpha\cdot\tau^{-1})sgn(q)\bigotimes_{i=1}^{n}f_{\gamma(i)}\\
=&\sum_{\gamma\in \mathfrak{S}_{n}}[\sum_{\substack {p\in P_{\lambda}\\q\in Q_{\lambda}}}\chi_{\pi}(\gamma\cdot p^{-1}\cdot q^{-1}\cdot\tau^{-1})sgn(q)]\bigotimes_{i=1}^{n}f_{\gamma(i)}.
\end{aligned}
\]
Let $\sigma$,$\tau$ $\in \mathfrak{S}_{n}$, $\sigma\cdot\tau=\sigma\cdot\tau\cdot\sigma^{-1}\cdot\sigma$, so $\sigma\cdot\tau=\tau^{'}\cdot\sigma$, where $\tau^{'}$ is conjugate to $\tau$ in $\mathfrak{S}_{n}$ by $\sigma$. Therefore, we can rewrite the previous equation as:
\[
\sum_{\gamma\in \mathfrak{S}_{n}}[\sum_{\substack {p\in P_{\lambda}\\q\in Q_{\lambda}}}\chi_{\pi}(\tau^{-1}\cdot\gamma\cdot p^{-1}\cdot q^{-1})sgn(q)]\bigotimes_{i=1}^{n}f_{\gamma(i)}.
\]\\
Therefore, it suffices to show:
\[
\sum_{\substack {p\in P_{\bar\lambda}\\q\in Q_{\bar\lambda}}}\chi_{\pi}(\gamma\cdot p\cdot q)sgn(q)=0,\forall\; \gamma\in \mathfrak{S}_{n}
\]
This equality holds because the left hand side is the trace of $\gamma\cdot c_{\bar\lambda}$ as an operator on the space $\mathbb{C}\mathfrak{S}_{n}\cdot c_{\bar\pi}$,the group algebra of $\mathfrak{S}_{n}$, which is a realization of $M_{\pi}$ in $\mathbb{C}\mathfrak{S}_{n}$, but this operator is in fact zero:
\[\forall\text{ } \sigma\in \mathfrak{S}_{n}\text{ , }
\gamma\cdot c_{\bar\lambda}\cdot\sigma\cdot c_{\bar\pi}=\gamma\cdot\sigma\cdot c_{\bar\lambda^{'}}\cdot c_{\bar\pi}=0,
\]
where $\bar\lambda^{'}$ is the young tableau of shape $\lambda^{'}$ which is conjugate to $\bar\lambda$ by $\sigma$. This implies that $c_{\bar\lambda}$ and $c_{\bar\pi}$ are of different type, hence $c_{\bar\lambda}\cdot c_{\bar\pi}=0$, in particular, the trace of this operator is $0$. Therefore, $P_{\pi}\in S_{\pi}(E)\otimes S_{\pi}(F)$.
\end{proof}
\begin{corollary}
If a homogeneous polynomial $Q$ of degree $n$ is preserved by the group $\Delta(\mathfrak{S}_{n})\times T(E)\times T(F)$, then $Q$ is a linear combination of immanants. Furthermore, immanants are linearly independent and form a basis of the space of all homogeneous degree $n$ polynomials preserved by $\Delta(\mathfrak{S}_{n})\times T(E)\times T(F))$.
\end{corollary}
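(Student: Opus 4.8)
The plan is to reduce the statement to the invertibility of the character table of $\mathfrak{S}_{n}$. Write a generic homogeneous monomial of degree $n$ in the $x_{ij}$ as $\prod_{k=1}^{n}x_{i_{k}j_{k}}$. The computation in Section $3$ shows that $(\mathbf{A},\mathbf{B})\in T(E)\times T(F)$ scales this monomial by $\prod_{k=1}^{n}a_{i_{k}}b_{j_{k}}$; requiring this to equal $1$ whenever $\prod_{i}a_{i}=\prod_{j}b_{j}=1$ forces, by comparing degrees, each row index and each column index to occur exactly once, so the monomial must be $m_{\sigma}:=\prod_{i=1}^{n}x_{i\sigma(i)}$ for a unique $\sigma\in\mathfrak{S}_{n}$. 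Equivalently, the space $V$ of degree-$n$ polynomials fixed by $T(E)\times T(F)$ is precisely the $SL(E)\times SL(F)$ weight-zero subspace $(S^{n}(E\otimes F))_{0}$, and $\{m_{\sigma}\}_{\sigma\in\mathfrak{S}_{n}}$ is a basis of it, so $\dim V=n!$. (If one reads ``preserved'' as ``preserved up to scalar'', note that a $1$-dimensional subrepresentation of $T(E)\times T(F)$ which is also stable under $\Delta(\mathfrak{S}_{n})$ has $\mathfrak{S}_{n}$-invariant weight, hence weight zero, so nothing changes.)

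Next I would cut $V$ down by $\Delta(\mathfrak{S}_{n})$-invariance. From $\sigma\cdot x_{ij}=x_{\sigma(i)\sigma(j)}$ one gets $\sigma\cdot m_{\tau}=m_{\sigma\tau\sigma^{-1}}$, so $\Delta(\mathfrak{S}_{n})$ acts on the basis $\{m_{\tau}\}$ by conjugating the index. Hence the invariant subspace $V^{\Delta(\mathfrak{S}_{n})}$ has as a basis the conjugacy-class sums $z_{\mathcal{C}}:=\sum_{\tau\in\mathcal{C}}m_{\tau}$, one for each conjugacy class $\mathcal{C}$ of $\mathfrak{S}_{n}$. In particular $\dim V^{\Delta(\mathfrak{S}_{n})}$ equals the number of conjugacy classes of $\mathfrak{S}_{n}$, which is the number of partitions of $n$, i.e.\ the number of immanants.

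Finally, since $\chi_{\pi}$ is a class function, $P_{\pi}=\sum_{\sigma\in\mathfrak{S}_{n}}\chi_{\pi}(\sigma)m_{\sigma}=\sum_{\mathcal{C}}\chi_{\pi}(\mathcal{C})\,z_{\mathcal{C}}$, so each immanant lies in $V^{\Delta(\mathfrak{S}_{n})}$ and the transition matrix from $\{z_{\mathcal{C}}\}$ to $\{P_{\pi}\}$ is the character table $\big(\chi_{\pi}(\mathcal{C})\big)_{\pi,\mathcal{C}}$. This is a square matrix, and it is invertible because the irreducible characters of $\mathfrak{S}_{n}$ are linearly independent (equivalently, by column orthogonality of characters) and their number equals the number of conjugacy classes. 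Therefore $\{P_{\pi}\}_{\pi\vdash n}$ is a basis of $V^{\Delta(\mathfrak{S}_{n})}$, which is exactly the space of homogeneous degree-$n$ polynomials preserved by $\Delta(\mathfrak{S}_{n})\times T(E)\times T(F)$; this proves both assertions at once.

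There is no serious obstacle here. The only point deserving care is the first step — pinning down that $T(E)\times T(F)$-invariance is literally equivalent to being a linear combination of the permutation monomials $m_{\sigma}$ (and handling the ``up to scalar'' reading as indicated above). Everything after that is bookkeeping plus the standard fact that the character table of a finite group is an invertible matrix.
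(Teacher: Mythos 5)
Your proof is correct, but it takes a genuinely different route from the paper. The paper deduces the corollary from its representation-theoretic machinery: the Cauchy decomposition $S^{n}(E\otimes F)=\bigoplus_{\lambda\vdash n}S_{\lambda}(E)\otimes S_{\lambda}(F)$, the identification $(S_{\lambda}(E))_{0}\cong M_{\lambda}$ as $\mathfrak{S}_{n}$-modules, the multiplicity-one count $(\chi_{\lambda}\otimes\chi_{\lambda},\chi_{trivial})=1$, and the Young-symmetrizer computation of Proposition \ref{proposition} showing $P_{\pi}\in\mathbb{C}_{\pi}\subset S_{\pi}(E)\otimes S_{\pi}(F)$; linear independence then comes for free because the immanants sit in distinct direct summands $\mathbb{C}_{\lambda}$. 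You instead argue entirely at the level of monomials: torus-invariance pins $Q$ to the span of the $n!$ monomials $m_{\sigma}=\prod_{i}x_{i\sigma(i)}$ (your exponent argument is the right one --- a character trivial on the determinant-one torus must have all exponents equal, hence equal to $1$), $\Delta(\mathfrak{S}_{n})$ acts on these by conjugation of the index so the invariants are spanned by class sums, and the transition matrix from class sums to immanants is the character table, which is invertible. Your version is shorter and more self-contained, needing only the linear independence of irreducible characters; what it does not recover is the finer structural statement that the paper's proof is really packaging, namely that each $P_{\pi}$ lies in the specific isotypic piece $S_{\pi}(E)\otimes S_{\pi}(F)$ --- the ``immanants as trivial $\mathfrak{S}_{n}$-modules'' description that the paper advertises and that motivates its Section 3. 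As a proof of the corollary as stated, yours is complete; the only point to state a bit more carefully is the first step, where ``comparing degrees'' should be spelled out as the observation that triviality of the weight character on the torus forces each row and column index to appear exactly once.
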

\begin{proof} If $Q$ is preserved by $\mathfrak{S}_{n}\times T(GL(E)\times GL(F))$, then $Q$ is in $\bigoplus _{\lambda\vdash n}\mathbb{C}_{\lambda}$. By the proposition, immanants form a basis of  $\bigoplus _{\lambda \vdash n}\mathbb{C}_{\lambda}$, the corollary follows.
\end{proof}
\section{\textbf{The stabilizer of immanant}}
Next, we study the stabilizer of immanants in the group $GL(E\otimes F)$.
\begin{example}:
For $\pi=(1,1,...,1)$ and $\pi=(n)$, $G(P_{\pi})$ are well-known: If $\pi=(1,1,...,1)$,then $G(P_{\pi})=S(GL(E)\times GL(F))\ltimes \mathbb{Z}_{2}$, and if $\pi=(n)$,then $G(P_{\pi})=\mathfrak{S}_{n}\times \mathfrak{S}_{n}\times T(GL(E)\times GL(F))\ltimes \mathbb{Z}_{2}$, where $S(GL(E)\times GL(F)$) is a subgroup of $GL(E)\times GL(F)$ consisting of pairs $(A,B)$ with $det(A)det(B)=1$, $T(GL(E)\times GL(F))$ is the pair of diagonal matrices with the product of determinants 1, and $\ltimes\mathbb{Z}_{2}$ means that we are allowed to take the transpose of matrices. For the stabilizer of determinant, see G.Frobenius \cite{MR2213154}. For the stabilizer of permanent, see Botta \cite{MR0213376}.
\end{example}
Assume $C=(c_{ij})$ and $X=(x_{ij})$ are $n\times n$ matrices. Denote the torus action of $C$ on $X$ by $C\ast X=Y$, where $Y=(y_{ij})$ is an $n\times n$ matrix with entry $y_{ij}=c_{ij}x_{ij}$. Note that the torus action is just the action of the diagonal matrices in $End(E\otimes F)$ on the vector space $E\otimes F$. To find $G(P_{\pi})$, we need the following result from \cite{MR1275631}:
\begin{theorem}[\cite{MR1275631}]\label{Duffner}
Assume  $n\ge4$, $\pi\ne (1,1,...,1)$ and $(n)$. A linear transformation $T\in GL(E\otimes F)$ preserves the immanant $P_{\pi}$ iff $T\in T(GL(E\otimes F))\ltimes \mathfrak{S}_{n}\ltimes \mathfrak{S}_{n}\ltimes \mathbb{Z}_{2}$, and satisfies the relation:
\[
\chi_{\pi}(\sigma)\prod_{i=1}^{n}c_{i\sigma(i)}=\chi_{\pi}(\tau_{2}\sigma\tau_{1}^{-1}),
\]
where  $\sigma$ runs over all elements in $\mathfrak{S}_{n}$, $T(GL(E\otimes F))$ is the torus of $GL(E\otimes F)$, acting by the torus action described above, $\mathfrak{S}_{n}$ is the symmetric group in $n$ elements, acting by left and right multiplication, and $\mathbb{Z}_{2}$ sending a matrix to its transpose.
\end{theorem}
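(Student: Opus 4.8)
The plan is to prove the two implications of the ``iff'' separately: the ``if'' direction by direct substitution, and the ``only if'' direction by a rigidity argument, where all the work lies. For the easy direction, suppose $T$ is built from the torus action $X\mapsto C\ast X$ with $C=(c_{ij})$, $c_{ij}\neq 0$, from left and right multiplication by the permutation matrices of some $\tau_1,\tau_2\in\mathfrak{S}_n$, and possibly from the transpose $X\mapsto X^{t}$. Since any $\sigma$ is conjugate to $\sigma^{-1}$ in $\mathfrak{S}_n$ we have $\chi_\pi(\sigma^{-1})=\chi_\pi(\sigma)$, so $P_\pi(X^{t})=P_\pi(X)$; and the permutation factors only relabel the monomials $\prod_i x_{i\sigma(i)}$. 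Substituting and re-indexing gives $T^{*}P_\pi=\sum_{\sigma}\chi_\pi(\tau_2\sigma\tau_1^{-1})\bigl(\prod_i c_{i\sigma(i)}\bigr)\prod_i x_{i\sigma(i)}$, so $T^{*}P_\pi=P_\pi$ is equivalent to the displayed relation; note that no scalar appears, which is why the relation carries no free constant.

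For the converse, let $T\in GL(E\otimes F)$ fix $P_\pi$, and set $W=E\otimes F$. Let $F_i:=e_i\otimes F$ and $E_j:=E\otimes f_j$ be the ``row'' and ``column'' subspaces, so $W=\bigoplus_i F_i=\bigoplus_j E_j$. Every monomial of $P_\pi$ is $\prod_i x_{i\sigma(i)}$, hence uses exactly one coordinate from each $F_i$ and one from each $E_j$; equivalently, $P_\pi$ lies in $F_1^{*}\otimes\cdots\otimes F_n^{*}$ and in $E_1^{*}\otimes\cdots\otimes E_n^{*}$ inside $S^{n}W^{*}$. Call a decomposition $W=V_1\oplus\cdots\oplus V_n$ \emph{admissible} if $P_\pi\in V_1^{*}\otimes\cdots\otimes V_n^{*}$. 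The crux is the following rigidity assertion: for $n\ge 4$ and $\pi\neq(1,\dots,1),(n)$, the row decomposition $\{F_i\}$ and the column decomposition $\{E_j\}$ are the \emph{only} admissible decompositions of $W$.

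Granting this, the argument closes quickly. From $T^{*}P_\pi=P_\pi$ one sees that $V_k\mapsto T^{-1}V_k$ carries admissible decompositions to admissible decompositions, so by the assertion it permutes the two-element set $\{\{F_i\},\{E_j\}\}$; since the transpose interchanges these two, after composing $T$ with the transpose if necessary we may assume $T(F_i)=F_{\tau^{-1}(i)}$ and $T(E_j)=E_{\rho^{-1}(j)}$ for some $\tau,\rho\in\mathfrak{S}_n$. Then $T(\mathbb{C}\,e_i\otimes f_j)=T(F_i\cap E_j)\subseteq F_{\tau^{-1}(i)}\cap E_{\rho^{-1}(j)}=\mathbb{C}\,e_{\tau^{-1}(i)}\otimes f_{\rho^{-1}(j)}$, so $T$ sends every coordinate line to a coordinate line, i.e.\ $T\in T(GL(E\otimes F))\ltimes\mathfrak{S}_n\ltimes\mathfrak{S}_n\ltimes\mathbb{Z}_2$. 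Comparing the coefficient of $\prod_i x_{i\sigma(i)}$, for each $\sigma$, in the identity $T^{*}P_\pi=P_\pi$ then produces the stated relation, with $\tau_1,\tau_2$ built from $\tau,\rho$ and the $c_{ij}$ read off from $T$ up to the obvious conventions.

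The main obstacle is the rigidity assertion, and this is precisely where the hypotheses are needed. For \emph{coordinate} admissible decompositions it is a blocking-set problem: a partition of the $n^{2}$ variables into $n$ blocks that meets every monomial $\prod_i x_{i\sigma(i)}$ with $\chi_\pi(\sigma)\neq 0$ in exactly one variable must be the rows or the columns. This is delicate because for partitions such as $(n-1,1)$ or $(2,2,\dots)$ many values $\chi_\pi(\sigma)$ vanish, so fewer monomials are available, and one must show that those that remain --- always including the main diagonal monomial $\prod_i x_{ii}$, whose coefficient $\dim[\pi]$ is nonzero --- still force the conclusion; it genuinely fails for small $n$ (already $n=3$, $\pi=(2,1)$ admits extra admissible decompositions), which is why $n\ge 4$ is imposed. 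Ruling out the non-coordinate admissible decompositions is a further layer, and here $\pi\neq(1,\dots,1)$ is essential: for $P_{(1,\dots,1)}=\det$ the admissible decompositions are the Lagrangian decompositions of $W$, which occur in positive-dimensional families, so both the rigidity and the theorem fail. Carrying out these two steps in full is the technical content of \cite{MR1275631}.
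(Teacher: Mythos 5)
Your reduction of the theorem to the ``rigidity assertion'' is structurally sound: the easy direction by substitution is fine, and \emph{if} the only decompositions $W=V_1\oplus\cdots\oplus V_n$ with $P_\pi\in V_1^{*}\otimes\cdots\otimes V_n^{*}$ were the row and the column decompositions, then your argument that $T$ preserves the set of coordinate lines (via $T(F_i\cap E_j)\subseteq T(F_i)\cap T(E_j)$, plus the dimension count ruling out $T$ sending both decompositions to the same one) would indeed place $T$ in $T(GL(E\otimes F))\ltimes\mathfrak{S}_n\ltimes\mathfrak{S}_n\ltimes\mathbb{Z}_2$, after which the displayed relation is just coefficient comparison. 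The problem is that the rigidity assertion \emph{is} the theorem's entire content in the hard direction, and you neither prove it nor correctly source it. Your closing claim that carrying it out ``is the technical content of \cite{MR1275631}'' is not accurate: Duffner's argument does not classify admissible direct-sum decompositions. It studies the most singular locus $X^{(n-1)}=\{A: \deg_x P_\pi(xA+B)\le 1\ \forall B\}$, shows it is preserved by $T$, and characterizes it explicitly as the union of the single-row and single-column subspaces $R_i$, $R^j$ together with exceptional components (certain $2\times 2$ configurations for general $\pi$, rank-one blocks for $\pi=(2,1,\dots,1)$, and zero-permanent $2\times 2$ blocks for $\pi=(n-1,1)$), and only then pins down $T$ by its action on the $R_i$, $R^j$. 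So you have replaced the cited technical statement with a different, unproven one.

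This matters because your rigidity assertion is not obviously implied by, nor equivalent to, what the reference proves, and the cases that make Duffner's characterization delicate are exactly the ones that threaten yours: for $\pi=(2,1,\dots,1)$ and $\pi=(n-1,1)$ the singular locus acquires extra components beyond rows and columns, which is a warning sign that ``extra'' admissible structures can exist for special $\pi$ and must be excluded by a genuine argument, not by analogy. Even in the generic case, your blocking-set formulation only addresses \emph{coordinate} decompositions; you acknowledge that non-coordinate decompositions are ``a further layer'' but offer nothing toward ruling them out, and for the determinant they form positive-dimensional families, so some use of $\pi\neq(1,\dots,1)$ beyond invoking it must appear. As it stands, the hard implication rests entirely on an assertion that is stated, motivated, and then attributed to a reference that proves something else; that is a genuine gap. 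To repair it you should either prove the rigidity assertion (including the exceptional partitions and the non-coordinate case), or follow the singular-locus route the reference actually takes.
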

\begin{proof}[sketch]

Step 1: Let $\pi$ be a fixed partition of $n$. Define a subset $\mathfrak{A}$ of the set $M_{n}(\mathbb{C})$ of $n$ by $n$ matrices as follows, $X^{(n-1)}:=\lbrace A\in M_{n}(\mathbb{C}):\text{degree of } P_{\pi}(xA+B)\leqslant1,\text{for every } B\in M_{n}(\mathbb{C})\rbrace$. Geometrically, $X^{(n-1)}$ is the most singular locus of $IM_{\pi}=0$. If $A$ is in $X^{(n-1)}$, and $T$ preserves $P_{\pi}$ (it can be shown that $T$ is invertible), then we have that $T(A)\in X^{(n-1)}$, since the preserver of the hypersurface $IM_{\pi}=0$ will preserve the most singular locus as well.\\
Step 2: Characterize the set $X^{(n-1)}$. To do this, first define a subset $R_{i}$ (resp. $R^{i}$) of $M_{n}(\mathbb{C})$, consisting of matrices that have nonzero entries only in $i$-th row (resp. column). Then one proves that $A\in X^{(n-1)}$ if and only if it is in one of the forms:\\
\begin{enumerate}
\item $R_{i}$ or $R^{i}$ for some $i$.
\item The nonzero elements are in the $2\times 2$ submatrix $A[i,h\mid i,h]$, and
\[
\chi_{\pi}(\sigma)a_{ii}a_{hh}+\chi_{\pi}(\tau)a_{ih}a_{hi}=0
\]for every $\sigma$ and $\tau$ satisfying $\sigma(i)=i$, $\tau(h)=h$ and $\tau=\sigma(ih)$.\\
\item $\pi=(2,1,...,1)$ and there are complementary sets of indices $\lbrace i_{1},...,i_{p} \rbrace$, $\lbrace j_{1},...,j_{q} \rbrace$ such that the nonzero elements are in $A[i_{1},...,i_{p}\mid j_{1},...,j_{q}]$ and the rank of $A[i_{1},...,i_{p}\mid j_{1},...,j_{q}]$ is one.\\
\item $\pi=(n-1,1)$, the nonzero elements are in a $2$ by $2$ submatrix $A[u,v\mid r,s]$, and the permanent of this submatrix is zero.\\
\end{enumerate}
Step 3: Characterize $T$ by sets $R_{i}$ and $R^{j}$.
\end{proof}

We will start from this theorem. From this theorem, we know that $G(P_{\pi})$ is contained in the group $\mathfrak{S}_{n}\times \mathfrak{S}_{n}\times \mathbb{C}^{n^{2}}\ltimes\mathbb{Z}^{2}$, and subject to the relation in Theorem \ref{Duffner}.
\begin{remark}
In the equation in the Theorem \ref{Duffner}, $c_{ij}\ne0, \forall\; 1\leqslant i,j\leqslant n$, since the stabilizer of $P_{\pi}$ is a group.
\end{remark}
Now instead of considering $n^{2}$ parameters, we can consider $n!$ parameters, ${i.e}$, consider the stabilizer of immanant in the bigger monoid $\mathfrak{S}_{n}\times \mathfrak{S}_{n}\times \mathbb{C}^{n!}\ltimes \mathbb{Z}^{2}$. We can ignore the $\mathbb{Z}^{2}$-part of this monoid. The action of the monoid on the weight zero space of $S^{n}(E\otimes F)$ spanned by monomials  $x_{1\sigma(1)}x_{2\sigma(2)}\cdots x_{n\sigma(n)}, \sigma\in\mathfrak{S}_{n}$ is:
\[
(\tau_{1},\tau_{2},(c_{\sigma})_{\sigma\in \mathfrak{S}_{n}})\cdot (x_{1\sigma(1)}x_{2\sigma(2)}\cdots x_{n\sigma(n)})=c_{\sigma}
x_{\tau_{1}(1)\tau_{2}\sigma(1)}x_{\tau_{1}(2)\tau{2}\sigma(2)}\cdots x_{\tau_{1}(n)\tau_{2}\sigma(n)}.
\]
\begin{proposition}
The stabilizer of $P_{\pi}$ in $\mathfrak{S}_{n}\times \mathfrak{S}_{n}\times \mathbb{C}^{n!}$ is determined by equations
\begin{equation}\label{E2}
c_{\tau_{2}^{-1}\tau_{1}\sigma}\chi_{\pi}(\tau_{2}^{-1}\tau_{1}\sigma)=\chi_{\pi}(\sigma),\forall\;\sigma\in\mathfrak{S}_{n}
\end{equation}
\end{proposition}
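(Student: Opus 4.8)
The plan is to work entirely in the $n!$-dimensional subspace $W\subset S^{n}(E\otimes F)$ spanned by the monomials $m_{\sigma}:=x_{1\sigma(1)}x_{2\sigma(2)}\cdots x_{n\sigma(n)}$, $\sigma\in\mathfrak{S}_{n}$ — this is exactly the weight-zero span on which the monoid $\mathfrak{S}_{n}\times\mathfrak{S}_{n}\times\mathbb{C}^{n!}$ acts in the displayed formula above. Since distinct $\sigma$ yield genuinely distinct monomials, the $m_{\sigma}$ are linearly independent, so an element of $W$ is determined by its coefficients in this basis; in particular $P_{\pi}=\sum_{\sigma}\chi_{\pi}(\sigma)\,m_{\sigma}$, and a monoid element fixes $P_{\pi}$ if and only if the coefficient of each $m_{\rho}$ in its image equals $\chi_{\pi}(\rho)$. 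Thus the whole statement reduces to writing the action of a general triple $(\tau_{1},\tau_{2},(c_{\sigma}))$ on $P_{\pi}$ back in the basis $\{m_{\rho}\}$ and reading off coefficients.

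First I would compute the action on a single basis monomial. By the displayed formula, $(\tau_{1},\tau_{2},(c_{\sigma}))\cdot m_{\sigma}=c_{\sigma}\prod_{i=1}^{n}x_{\tau_{1}(i),\,\tau_{2}\sigma(i)}$. Re-indexing the product by $k=\tau_{1}(i)$, i.e.\ $i=\tau_{1}^{-1}(k)$, rewrites it as $c_{\sigma}\prod_{k=1}^{n}x_{k,\,\tau_{2}\sigma\tau_{1}^{-1}(k)}=c_{\sigma}\,m_{\tau_{2}\sigma\tau_{1}^{-1}}$. So on the basis $\{m_{\sigma}\}$ the monoid acts by the simple rule $m_{\sigma}\mapsto c_{\sigma}\,m_{\tau_{2}\sigma\tau_{1}^{-1}}$: it rescales and permutes the basis according to $\sigma\mapsto\tau_{2}\sigma\tau_{1}^{-1}$.

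Next I would push this through $P_{\pi}$, getting $(\tau_{1},\tau_{2},(c_{\sigma}))\cdot P_{\pi}=\sum_{\sigma}\chi_{\pi}(\sigma)\,c_{\sigma}\,m_{\tau_{2}\sigma\tau_{1}^{-1}}$, and then substitute $\rho=\tau_{2}\sigma\tau_{1}^{-1}$ (so $\sigma=\tau_{2}^{-1}\rho\tau_{1}$, and $\rho$ ranges over all of $\mathfrak{S}_{n}$ as $\sigma$ does), which turns the sum into $\sum_{\rho}\chi_{\pi}(\tau_{2}^{-1}\rho\tau_{1})\,c_{\tau_{2}^{-1}\rho\tau_{1}}\,m_{\rho}$. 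Equating coefficients with $P_{\pi}=\sum_{\rho}\chi_{\pi}(\rho)\,m_{\rho}$ and using linear independence of the $m_{\rho}$ gives $\chi_{\pi}(\tau_{2}^{-1}\rho\tau_{1})\,c_{\tau_{2}^{-1}\rho\tau_{1}}=\chi_{\pi}(\rho)$ for every $\rho$, and this condition is visibly sufficient as well as necessary. Finally, to match the stated form I would relabel $\rho=\tau_{1}\sigma\tau_{1}^{-1}$: then $\tau_{2}^{-1}\rho\tau_{1}=\tau_{2}^{-1}\tau_{1}\sigma$, and since $\chi_{\pi}$ is a class function $\chi_{\pi}(\rho)=\chi_{\pi}(\sigma)$, so the system becomes exactly $c_{\tau_{2}^{-1}\tau_{1}\sigma}\,\chi_{\pi}(\tau_{2}^{-1}\tau_{1}\sigma)=\chi_{\pi}(\sigma)$ for all $\sigma$, i.e.\ \eqref{E2}.

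I do not expect a genuine obstacle: the proposition is just a bookkeeping translation of Theorem \ref{Duffner} into the enlarged $n!$-parameter monoid. The only points needing care are (i) the re-indexing $k=\tau_{1}(i)$, which is where left multiplication by $\tau_{1}$ becomes right multiplication by $\tau_{1}^{-1}$ on the index $\sigma$, and (ii) the final cosmetic conjugation that moves $\tau_{1}$ from the right of $\sigma$ to the left; both are routine once one is disciplined about the direction in which permutations act on matrix indices.
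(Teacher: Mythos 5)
Your proposal is correct and follows essentially the same route as the paper: both compute the action on the monomial basis, re-index by $k=\tau_{1}(i)$, substitute $\rho=\tau_{2}\sigma\tau_{1}^{-1}$, equate coefficients using linear independence of the monomials, and finish with the class-function property of $\chi_{\pi}$ to move $\tau_{1}$ across $\sigma$. Your version merely makes the coefficient comparison and the sufficiency direction slightly more explicit than the paper does.
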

\begin{proof}
The action of this monoid on $P_{\pi}$ is:
\[
\begin{aligned}
(\tau_{1},\tau_{2},(c_{\sigma})_{\sigma\in \mathfrak{S}_{n}})\cdot P_{\pi}=&\sum_{\sigma\in\mathfrak{S}_{n}}\chi_{\pi}(\sigma)c_{\sigma}\prod_{i=1}^{n}x_{\tau_{1}(i),\tau_{2}\sigma(i)}\\
=&\sum_{\sigma\in\mathfrak{S}_{n}}\chi_{\pi}(\sigma)c_{\sigma}\prod_{i=1}^{n}x_{i,\tau_{2}\sigma\tau_{1}^{-1}(i)}\\
=&\sum_{\sigma\in\mathfrak{S}_{n}}\chi_{\pi}(\tau_{2}^{-1}\sigma\tau_{1})c_{\tau_{2}^{-1}\sigma\tau_{1}}\prod_{i=1}^{n}x_{i,\sigma(i)}\\
=&\sum_{\sigma\in\mathfrak{S}_{n}}\chi_{\pi}(\tau_{2}^{-1}\tau_{1}(\tau_{1}^{-1}\sigma\tau_{1}))c_{\tau_{2}^{-1}\tau_{1}(\tau_{1}^{-1}\sigma\tau_{1})}\prod_{i=1}^{n}x_{i,\sigma(i)}
\end{aligned}
\]
If $(\tau_{1},\tau_{2},(c_{\sigma}))$ stabilizes $P_{\pi}$, then
\[
\begin{aligned}
\chi_{\pi}(\tau_{2}^{-1}\tau_{1}(\tau_{1}^{-1}\sigma\tau_{1}))c_{\tau_{2}^{-1}\tau_{1}(\tau_{1}^{-1}\sigma\tau_{1})}
=&\chi_{\pi}(\sigma)=\chi_{\pi}(\tau_{1}^{-1}\sigma\tau_{1}),\forall\; \sigma\in\mathfrak{S}_{n}.
\end{aligned}
\]
Therefore, we have: $c_{\tau_{2}^{-1}\tau_{1}\sigma}\chi_{\pi}(\tau_{2}^{-1}\tau_{1}\sigma)=\chi_{\pi}(\sigma),\forall\; \sigma\in \mathfrak{S}_{n}.$
\end{proof}
Our next task is to find $\tau_{1}$ and $\tau_{2}$, such that the equation (\ref{E2}) has solution for $(c_{\sigma})_{\sigma\in \mathfrak{S}_{n}}$. For convenience, in the equation ($\ref{E2}$), set $\tau_{2}^{-1}\tau_{1}=\tau$, so we get a new equation:
\begin{equation}\label{E3}
c_{\tau\sigma}\chi_{\pi}(\tau\sigma)=\chi_{\pi}(\sigma),\forall\;\sigma\in\mathbb{S}_{n}
\end{equation}
\begin{lemma}\label{lemma2}
If the equation (\ref{E3}) has a solution then $\tau\in\mathfrak{S}_{n}$ satisfies:
\begin{enumerate}
\item If $\chi_{\pi}(\sigma)=0$, then $\chi_{\pi}(\tau\sigma)=0$;
\item If $\chi_{\pi}(\sigma)\ne0$, then $\chi_{\pi}(\tau\sigma)\ne0$;
\end{enumerate}
\end{lemma}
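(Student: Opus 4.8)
The plan is to read off statement (2) directly from equation (\ref{E3}), and then obtain statement (1) from (2) by an elementary cardinality argument on $\mathfrak{S}_n$; no information about the coefficients $(c_\sigma)$ beyond the defining relation will be needed.

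First I would dispose of (2). Fix any solution $(c_\sigma)_{\sigma\in\mathfrak{S}_n}$ of (\ref{E3}). If $\chi_\pi(\sigma)\ne 0$, then the right-hand side of $c_{\tau\sigma}\chi_\pi(\tau\sigma)=\chi_\pi(\sigma)$ is nonzero, so the left-hand side is nonzero; in particular $\chi_\pi(\tau\sigma)\ne 0$ (and incidentally $c_{\tau\sigma}\ne 0$). This is exactly assertion (2).

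For (1), I would introduce the support $S:=\{\sigma\in\mathfrak{S}_n : \chi_\pi(\sigma)\ne 0\}$. Then (2) says precisely that left translation by $\tau$ maps $S$ into itself, i.e.\ $\tau S\subseteq S$. Since $\sigma\mapsto\tau\sigma$ is a bijection of the finite set $\mathfrak{S}_n$, the sets $\tau S$ and $S$ have the same finite cardinality, so the inclusion $\tau S\subseteq S$ is an equality: $\tau S=S$, hence also $\tau^{-1}S=S$. Now if $\chi_\pi(\sigma)=0$, i.e.\ $\sigma\notin S$, then $\tau\sigma\notin S$ as well: otherwise $\sigma=\tau^{-1}(\tau\sigma)\in\tau^{-1}S=S$, a contradiction. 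Thus $\chi_\pi(\tau\sigma)=0$, which is assertion (1).

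I do not expect a genuine obstacle here: assertion (2) is forced by a single line, and assertion (1) then follows from the basic fact that an injection of a finite set into itself is surjective, so that ``$\tau$ preserves the support of $\chi_\pi$'' is automatic once one half of it is known. The only mild subtlety worth flagging is that one should resist trying to first argue $c_{\tau\sigma}\ne 0$ (say from group or monoid structure) in order to divide; the counting argument above is cleaner and requires no nonvanishing hypothesis on the $c_\sigma$ at all.
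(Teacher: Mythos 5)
Your proof is correct; the paper itself dismisses this lemma with ``Clear,'' so your write-up simply supplies the details. Part (2) is indeed immediate from equation (\ref{E3}), and your counting argument for part (1) --- showing $\tau S\subseteq S$ forces $\tau S=S$ because left translation is a bijection of the finite set $\mathfrak{S}_n$ --- correctly handles the one genuine subtlety, namely that one cannot conclude $\chi_\pi(\tau\sigma)=0$ directly from $c_{\tau\sigma}\chi_\pi(\tau\sigma)=0$ without knowing $c_{\tau\sigma}\ne 0$.
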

\begin{proof}
Clear.
\end{proof}
\begin{definition}:
For a fixed partition $\pi\vdash n$, define:
\[P:=\lbrace\sigma\in\mathfrak{S}_{n}\mid\chi_{\pi}(\sigma)=0\rbrace,\]
\[Q:=\lbrace\sigma\in\mathfrak{S}_{n}\mid\chi_{\pi}(\sigma)\ne0\rbrace.\]
\[G:=(\cap_{\sigma\in P}P\sigma)\cap(\cap_{\sigma\in Q}Q \sigma).\]
\end{definition}
\begin{lemma}
The equation ($\ref{E3}$) has a solution, then $\tau\in G$.
\end{lemma}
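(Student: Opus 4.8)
The plan is to deduce $\tau\in G$ straight from Lemma~\ref{lemma2}, using only the elementary fact that the characters of $\mathfrak{S}_{n}$ are invariant under inversion. First I would record this fact: $\chi_{\pi}$ is a class function and every $\sigma\in\mathfrak{S}_{n}$ has the same cycle type as $\sigma^{-1}$, so $\chi_{\pi}(\sigma^{-1})=\chi_{\pi}(\sigma)$. Hence $\sigma\in P\iff\sigma^{-1}\in P$ and $\sigma\in Q\iff\sigma^{-1}\in Q$; that is, both $P$ and $Q$ are stable under taking inverses.

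Next I would restate Lemma~\ref{lemma2} in set-theoretic form: if (\ref{E3}) has a solution, then part (1) says $\tau P\subseteq P$ and part (2) says $\tau Q\subseteq Q$. Since $\rho\mapsto\tau\rho$ is a bijection of $\mathfrak{S}_{n}$ and $\{P,Q\}$ partitions $\mathfrak{S}_{n}$, these inclusions are in fact equalities, but only the inclusions are needed below.

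Now fix $\sigma\in P$. By the first step $\sigma^{-1}\in P$, so $\tau\sigma^{-1}\in\tau P\subseteq P$; equivalently $\tau\in P\sigma$. As $\sigma$ was arbitrary in $P$, this gives $\tau\in\bigcap_{\sigma\in P}P\sigma$. The same argument verbatim, with $Q$ in place of $P$ and part (2) of Lemma~\ref{lemma2} in place of part (1), shows $\tau\in\bigcap_{\sigma\in Q}Q\sigma$. Intersecting the two conclusions yields $\tau\in G$, as desired.

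There is no serious obstacle here: the statement follows in a few lines once Lemma~\ref{lemma2} is available. The only thing to watch is the passage between the hypothesis ``$\tau$ times an element of $P$ lands in $P$'' and the coset membership ``$\tau\in P\sigma$'', which introduces an inverse and is precisely where the symmetry $\chi_{\pi}(\sigma^{-1})=\chi_{\pi}(\sigma)$ enters; all of the genuine content has already been packed into Theorem~\ref{Duffner} and Lemma~\ref{lemma2}, which we may assume.
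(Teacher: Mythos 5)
Your proof is correct and takes essentially the same route as the paper's: restate Lemma \ref{lemma2} as $\tau P\subseteq P$ and $\tau Q\subseteq Q$, then convert to the coset memberships defining $G$. The only difference is that you make explicit the fact that $P$ and $Q$ are closed under taking inverses (since $\chi_{\pi}(\sigma^{-1})=\chi_{\pi}(\sigma)$), a point the paper uses silently when it passes from $\tau\in P\sigma^{-1}$ for all $\sigma\in P$ to $\tau\in\bigcap_{\sigma\in P}P\sigma$.
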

\begin{proof}
It suffices to show that the $2$ conditions in Lemma $\ref{lemma2}$ imply $\tau\in G$. If $\tau$ satisfies conditions $1$ and $2$, then
\[
\tau\sigma\in P,\;\forall\sigma\in P;\tau\sigma^{'}\in Q,\;\forall\sigma^{'}\in Q,
\]
therefore
\[
\tau\in P\sigma^{-1},\;\forall\;\sigma\in P;\tau\in Q\sigma^{'-1},\;\forall\;\sigma^{'}\in Q,
\]
so
\[
\tau\in G.
\]
\end{proof}
\begin{example}
We can compute $G$ directly for small $n$. If $n=3$, then we have three representations $M_{(3)}$, $M_{(1,1,1)}$, $M_{(2,1)}$, $G=\mathfrak{S}_{3}, A_{3}, A_{3}$, respectively. If $n=4$, then we have five representations $M_{(4)}$, $M_{(1,1,1,1)}$, $M_{3,1}$, $M_{(2,1,1)}$, $M_{(2,2)}$, Here $G=\mathfrak{S}_{4}$, $A_{4}$, $\lbrace (1), (12)(34), (13)(24), (14)(23)\rbrace$, $\lbrace (1), (12)(34), (13)(24), (14)(23)\rbrace$, $A_{4}$, respectively. If the partition $\pi=(1,1,...,1)$, then $G=A_{n}$. If the partition $\pi=(n)$, then $G=\mathfrak{S}_{n}$. Note that in these examples, $G$ is a normal subgroup. In fact, this holds in general.
\end{example}
The following proposition is due to Coelho, M. Purifica{\c{c}}{\~a}o in \cite{MR1412753}, using Murnagham-Nakayama Rule. One can give a different proof using Frobenius character formula for cycles (see, for example, \cite{MR1153249}).
\begin{proposition}[\cite{MR1412753}]\label{Coelho}
For any $n\ge5$ and partition $\pi\vdash n$, $G$ is a normal subgroup of $\mathfrak{S}_{n}$. Moreover, if $\pi\ne(1,1,...,1)$ or $(n)$, then $G$ is the trivial subgroup $(1)$ of $\mathfrak{S}_{n}$.
\end{proposition}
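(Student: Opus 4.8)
The plan is to identify $G$ abstractly, obtain normality for free, and then rule out the two proper non-trivial normal subgroups of $\mathfrak{S}_n$ by exhibiting a single zero of $\chi_\pi$ on an even conjugacy class. Since $\chi_\pi$ is a class function, $P$ and $Q=\mathfrak{S}_n\setminus P$ are unions of conjugacy classes of $\mathfrak{S}_n$, and conjugacy classes of $\mathfrak{S}_n$ are closed under inversion, so $P^{-1}=P$. Hence for $\tau\in\mathfrak{S}_n$ we have $\tau\in\bigcap_{\sigma\in P}P\sigma$ iff $\tau\sigma^{-1}\in P$ for all $\sigma\in P$ iff $\tau P\subseteq P$ iff $\tau P=P$ (by finiteness), and likewise $\tau\in\bigcap_{\sigma\in Q}Q\sigma$ iff $\tau Q=Q$, which is the same condition. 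Thus $G=\{\tau:\tau P=P\}$ is the stabilizer of $P$ under left translation, in particular a subgroup; and it is normal, since if $\tau P=P$ and $g\in\mathfrak{S}_n$ then $gP=Pg$ ($P$ being conjugation-invariant), so $(g\tau g^{-1})P=g(\tau P)g^{-1}=gPg^{-1}=P$. For $n\ge 5$ the alternating group $A_n$ is simple, so the only normal subgroups of $\mathfrak{S}_n$ are $\{1\},A_n,\mathfrak{S}_n$ -- this is precisely where $n\ge 5$ enters, consistent with the Remark after Theorem \ref{theorem2} that the statement genuinely fails for $n=4$ (there $\mathfrak{S}_4$ has the extra normal Klein four-subgroup). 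In particular $G$ is always normal and equals one of $\{1\},A_n,\mathfrak{S}_n$.

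Now take $\pi\vdash n$ with $n\ge 5$ and $\pi\ne(1,\dots,1),(n)$. Since $\chi_\pi(1)=\dim M_\pi\ne 0$, the identity is not in $P$. I claim it suffices to produce one even $\sigma\ne 1$ with $\chi_\pi(\sigma)=0$: a left-$A_n$-invariant subset of $\mathfrak{S}_n$ is a union of the two cosets $A_n$ and $\mathfrak{S}_n\setminus A_n$, whereas such a $P$ would meet $A_n$ (it contains $\sigma$) and yet omit $1\in A_n$ -- impossible. So $A_n\not\subseteq G$, and by normality $G=\{1\}$. Hence the proposition reduces to the assertion that for every $\pi\vdash n$ with $n\ge 5$, $\pi\ne(1^n),(n)$, the character $\chi_\pi$ has a zero on some non-identity even conjugacy class.

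This I would prove with the Murnaghan--Nakayama rule (or Frobenius' formula for cycle characters), by cases on the shape of $\pi$. (a) If $\pi_2\ge 3$ or $\pi_3\ge 2$, then $\chi_\pi$ vanishes on the class of an $n$-cycle (no size-$n$ border strip exists unless $\pi$ is a hook) and on the class of an $(n-1)$-cycle with one fixed point (the only admissible removal there leaves the corner box $(1,1)$, and $\pi\setminus\{(1,1)\}$ is a border strip only when $\pi_2\le 2$ and $\pi_3\le 1$); one of these two classes is even -- the $n$-cycle when $n$ is odd, the $(n-1)$-cycle-with-fixed-point when $n$ is even. (b) If $\pi=(n-k,1^k)$ is a non-extreme hook ($1\le k\le n-2$): for $n$ even, $\chi_\pi$ again vanishes on the even $(n-1)$-cycle-plus-fixed-point class exactly as in (a); for $n$ odd, use $\chi_\pi=\Lambda^k(\mathrm{std})$, whose value on a single $\ell$-cycle (other points fixed) is $\binom{n-\ell-1}{k}$ when $\ell>k$, and choose an odd $\ell\in[n-k,n-1]$ (hence an even permutation) making this vanish -- possible for $k\ge 2$, while for $k=1$ one uses that $\chi_{(n-1,1)}(\sigma)=\mathrm{fix}(\sigma)-1$ vanishes on any even $\sigma$ with exactly one fixed point. (c) The only remaining partitions are the near-hooks $\pi=(a,2,1^b)$ with $a\ge 2$; twisting by $\mathrm{sgn}$ (which does not affect vanishing on even permutations, and sends this to $(b+2,2,1^{a-2})$) lets us assume $a\ge 3$, and then a direct Murnaghan--Nakayama expansion on a suitable even class -- of type $(3,n-3)$ or $(n-2,2)$ when $n$ is even, the $n$-cycle when $n$ is odd -- produces a zero.

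The structural steps -- that $G$ is a normal subgroup, the classification of normal subgroups of $\mathfrak{S}_n$, and the reduction to exhibiting one even zero of $\chi_\pi$ -- are short and formal. The main obstacle is step (c) together with the $n$-odd part of step (b): Murnaghan--Nakayama immediately shows that every $\chi_\pi$ with $\pi\ne(n),(1^n)$ has zeros, but one must ensure that at least one zero lies on an \emph{even} conjugacy class, and for the hook families with $n$ odd and the near-hook families with $n$ even this needs the explicit cycle-type constructions above, which require some care with parities rather than a single clean argument.
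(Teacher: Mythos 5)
Your structural reduction is correct and is essentially the route the paper sketches: $G=\{\tau:\tau P=P\}$ is the left stabilizer of the conjugation-invariant set $P$, hence a normal subgroup; for $n\ge 5$ simplicity of $A_n$ leaves only $\{1\},A_n,\mathfrak{S}_n$; and since $1\notin P$, a single non-identity \emph{even} zero of $\chi_\pi$ rules out $A_n\subseteq G$, forcing $G=\{1\}$. The one place you diverge is that you skip the paper's intermediate deduction: if $A_n\subseteq G$ then $P=\mathfrak{S}_n\setminus A_n$ exactly, which forces $\chi_\pi=\chi_\pi\cdot\mathrm{sgn}$, i.e.\ $\pi$ self-conjugate, so the paper only needs an even zero for \emph{symmetric} $\pi$. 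You instead set out to prove the stronger statement that every $\chi_\pi$ with $\pi\ne(n),(1^n)$, $n\ge5$, vanishes at some non-identity even element. That statement is true, but it is exactly in the enlarged case analysis that your argument develops a gap.

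Concretely, case (b) for $n$ odd is wrong as stated: the formula $\chi_{\Lambda^k(\mathrm{std})}(\ell\text{-cycle})=\binom{n-\ell-1}{k}$ is only valid for $\ell>k$ (otherwise there is a second term $\pm\binom{n-\ell-1}{k-\ell}$), and for large $k$ there need not be any odd $\ell$ with $\max(k+1,\,n-k)\le\ell\le n-1$. For example $\pi=(2,1^{n-2})$ (so $k=n-2$) with $n$ odd: the only $\ell>k$ is $\ell=n-1$, which is even; and a direct check (say $n=7$) shows $\Lambda^{n-2}(\mathrm{std})=\mathrm{std}\otimes\mathrm{sgn}$ is nonzero on every single-cycle even class, so your recipe produces no zero there, even though even zeros exist (e.g.\ products of two $3$-cycles, via $\mathrm{fix}(\sigma)-1=0$). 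The repair is the same $\mathrm{sgn}$-twist you invoke in (c): replacing $\pi$ by $\pi'$ does not change values on even elements, so you may assume $k\le(n-1)/2$, and then $\ell\ge n-k>k$ is automatic and the interval $[n-k,n-1]$ (length $k\ge2$) contains an odd $\ell$ — but this reduction must be stated in (b), not only in (c). Separately, case (c) is asserted rather than proved: you name candidate even classes ($(n-2,2)$, $(3,n-3)$, or the $n$-cycle) but do not carry out the Murnaghan--Nakayama check that one of them vanishes for every $(a,2,1^b)$; the check does go through (for $b\ge1$, $a\ge3$ the class $(n-2,2)$ works because both candidate strip removals disconnect, and for $b=0$ the class $(3,n-3)$ works), but as written this step is a placeholder, just as it is in the paper's own sketch.
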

\begin{proof}[Sketch]
It is easy to show that $G$ is a normal subgroup of $\mathfrak{S}_{n}$. And then one can prove $G\ne \mathfrak{S}_{n}$ by computing character $\chi_{\pi}$. Then assume $G=A_{n}$, one can show that $Q=A_{n}$ and $P=\mathfrak{S}_{n}-A_{n}$. If such a partition $\pi$ exists, then it must be symmetric. But then one can construct cycles $\sigma$ contained in $A_{n}$ case by case (using Murnagham-Nakayama rule or Frobenius's character formula) such that $\chi_{\pi}(\sigma)=0$. It contradicts that $Q=A_{n}$.
\end{proof}
Let's return to the equation (see Theorem \ref{Duffner}):
\[
\chi_{\pi}(\sigma)\prod_{i=1}^{n}c_{i\sigma(i)}=\chi_{\pi}(\tau_{2}\sigma\tau_{1}^{-1}) \hspace{15mm}\forall\;\sigma\in \mathfrak{S}_{n}
\]
By Proposition \ref{Coelho}, we can set $\tau_{1}=\tau_{2}$ in the above equation then we have equations for $c_{\sigma}$'s:
\begin{equation}\label{E4}
\prod_{i=1}^{n}c_{i\sigma(i)}=1\; \forall\; \sigma\in\mathfrak{S}_{n}, \;\text{with}\; \chi_{\pi}(\sigma)\ne0.
\end{equation}
So elements in $G(P_{\pi})$ can be expressed as triples $(\tau,\tau,(c_{ij}))$ where matrices $(c_{ij})$ is determined by equation(\ref{E4}).
\begin{remark}
The coefficients of those linear equations are $n\times n$ permutation matrices. If we ignore the restriction $\chi_{\pi}(\sigma)\ne0$,then we get all $n\times n$ permutation matrices.
\end{remark}
\begin{lemma}\label{maxrank}
The permutation matrices span a linear space of dimension $(n-1)^2+1$ in $Mat_{n\times n}\cong \mathbb{C}^{n^{2}}$.
\end{lemma}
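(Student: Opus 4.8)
The plan is to sandwich the span $V$ of all $n\times n$ permutation matrices between two copies of an explicitly described subspace. Write $\Pi_\sigma\in Mat_{n\times n}$ for the permutation matrix attached to $\sigma\in\mathfrak{S}_n$, say $(\Pi_\sigma)_{ij}=1$ iff $j=\sigma(i)$; let $E_{ij}$ be the matrix units and $J=\sum_{i,j}E_{ij}$ the all-ones matrix. Put
\[
W:=\{\,M\in Mat_{n\times n}\ :\ \text{all row sums of }M\text{ are equal and all column sums of }M\text{ are equal}\,\}.
\]
Every $\Pi_\sigma$ lies in $W$ (each of its row and column sums is $1$) and $W$ is linear, so $V\subseteq W$. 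I would next record two facts about $W$. First, $W=\mathcal{A}_0\oplus\mathbb{C}J$ where $\mathcal{A}_0:=\{M:\text{all row and column sums of }M\text{ are }0\}$: if $M\in W$ has common row sum $r$, counting all entries two ways shows the common column sum is also $r$, so $M-\tfrac{r}{n}J\in\mathcal{A}_0$. Second, $\dim\mathcal{A}_0=(n-1)^2$: the matrices $N_{ij}:=E_{11}-E_{1j}-E_{i1}+E_{ij}$ for $2\le i,j\le n$ lie in $\mathcal{A}_0$, they are linearly independent (the $(i,j)$-entry with $i,j\ge 2$ detects $N_{ij}$), and every $M\in\mathcal{A}_0$ equals $\sum_{i,j\ge 2}M_{ij}N_{ij}$ (subtract this sum; the difference is supported on the first row and first column, and vanishing of all row and column sums forces it to be $0$). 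Hence $\dim W=(n-1)^2+1$.

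The remaining step, and the only one needing a real idea, is the reverse inclusion $W\subseteq V$. Since $\{N_{ij}\}\cup\{J\}$ spans $W$, it suffices to write each of these as a linear combination of permutation matrices. For $J$ this is immediate: $\sum_{\sigma\in\mathfrak{S}_n}\Pi_\sigma=(n-1)!\,J$, since for each $(i,j)$ exactly $(n-1)!$ permutations satisfy $\sigma(i)=j$. For $N_{ij}$ with $2\le i,j\le n$, I would pick any $\sigma\in\mathfrak{S}_n$ with $\sigma(1)=1$ and $\sigma(i)=j$ (take $\sigma=(i\,j)$, or $\sigma=\mathrm{id}$ when $i=j$) and set $\sigma'=\sigma\cdot(1\,i)$. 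Then $\sigma'$ agrees with $\sigma$ except that $\sigma'(1)=\sigma(i)=j$ and $\sigma'(i)=\sigma(1)=1$, so $\Pi_\sigma$ and $\Pi_{\sigma'}$ differ only in rows $1$ and $i$ and
\[
\Pi_\sigma-\Pi_{\sigma'}=E_{1,1}+E_{i,j}-E_{1,j}-E_{i,1}=N_{ij}.
\]
Therefore $W\subseteq V\subseteq W$, so $V=W$ and $\dim V=(n-1)^2+1$.

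I do not expect a serious obstacle; the point requiring attention is the lower bound — confirming that the permutation matrices span all of $W$, not a proper subspace — and the explicit identity $N_{ij}=\Pi_\sigma-\Pi_{\sigma\cdot(1\,i)}$ handles it directly. If one prefers, the inclusion $\mathcal{A}_0\subseteq V$ also follows from Birkhoff--von Neumann: every doubly stochastic matrix, in particular $\tfrac1n J+\varepsilon M$ for $M\in\mathcal{A}_0$ and $\varepsilon>0$ small, is a convex, hence linear, combination of permutation matrices. Alternatively, one may argue representation-theoretically: $V$ is the image of the group algebra $\mathbb{C}\mathfrak{S}_n$ in $\operatorname{End}(\mathbb{C}^n)$ under the permutation representation $\mathbb{C}^n\cong M_{(n)}\oplus M_{(n-1,1)}$, so by the Wedderburn decomposition $\dim V=(\dim M_{(n)})^2+(\dim M_{(n-1,1)})^2=1+(n-1)^2$.
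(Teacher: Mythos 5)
Your proof is correct, and your main argument is genuinely different from the paper's. The paper identifies the span of the permutation matrices with the image of $\mathbb{C}[\mathfrak{S}_{n}]$ in $End(\mathbb{C}^{n})$ under the permutation representation, decomposes $\mathbb{C}^{n}\cong M_{(n-1,1)}\oplus\mathbb{C}$, and invokes the Artin--Wedderburn isomorphism $\mathbb{C}[\mathfrak{S}_{n}]\cong\bigoplus_{\lambda}End(M_{\lambda})$ to get surjectivity onto $End(M_{(n-1,1)})\oplus End(\mathbb{C})$, whence the dimension $(n-1)^{2}+1$ --- this is exactly the representation-theoretic alternative you mention in your last sentence. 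Your primary route instead characterizes the span concretely as the space $W$ of matrices with constant row sums and constant column sums: the upper bound $V\subseteq W$ is immediate, the dimension count via the basis $\{N_{ij}\}\cup\{J\}$ of $W$ is clean, and the identity $N_{ij}=\Pi_{\sigma}-\Pi_{\sigma\cdot(1\,i)}$ settles the reverse inclusion explicitly (I checked the row-wise bookkeeping with your convention $(\Pi_{\sigma})_{ij}=1$ iff $j=\sigma(i)$; it is right). What your approach buys is self-containedness and an explicit spanning set, plus the extra information that the span is precisely $W$; what the paper's approach buys is brevity given that the Schur--Weyl/group-algebra machinery is already in play throughout the paper, and it makes transparent why the answer is $\sum(\dim M_{\lambda})^{2}$ over the constituents of $\mathbb{C}^{n}$. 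Either proof is complete; no gap.
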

\begin{proof}
Consider the action of $\mathfrak{S}_{n}$ on $\mathbb{C}^{n}$ by permuting the entries, so $\sigma\in\mathfrak{S}_{n}$ is an element in $End(\mathbb{C}^{n})$, corresponding to a permutation matrix, and vice versa. Now as $\mathfrak{S}_{n}$ modules, $\mathbb{C}^{n}\cong M_{(n-1,1)}\oplus \mathbb{C}$, where $\mathbb{C}$ is the trivial representation of $\mathfrak{S}_{n}$. So we have a decomposition of vector spaces:
\[
\begin{aligned}
End(\mathbb{C}^{n})\cong& End(M_{(n-1,1)}\oplus \mathbb{C})\\
\cong& End(M_{(n-1,1)})\oplus End(\mathbb{C})\oplus Hom(M_{(n-1,1)},\mathbb{C})\oplus Hom(\mathbb{C},M_{(n-1,1)}).
\end{aligned}
\]
Since $\mathbb{C}$ and $M_{(n-1,1)}$ are $\mathfrak{S}_{n}$ modules, $\mathfrak{S}_{n}\hookrightarrow End(M_{(n-1,1)})\oplus End(\mathbb{C})$. Note that $dim(End(M_{(n-1,1)})\oplus End(\mathbb{C}))=(n-1)^{2}+1$, so it suffices to show that $\mathfrak{S}_{n}$ will span $End(M_{(n-1,1)})\oplus End(\mathbb{C})$, but this is not hard to see because we have an algebra isomorphism:
\[\mathbb{C}[\mathfrak{S}_{n}]\cong\bigoplus_{\lambda\vdash n}End(M_{\lambda})
\]
Hence, we have:
\[
\mathbb{C}[\mathfrak{S}_{n}] \twoheadrightarrow End(M_{(n-1,1)})\oplus End(\mathbb{C}).
\]
\end{proof}
\begin{remark}Lemma \ref{maxrank} shows that the dimension of the stabilizer of an immanant is at least $2n-2$. We will show next that for any partition $\pi$ of $n\ge 5$ except $(1,...,1)$ and $(n)$, the dimension of the stabilizer $G$ ($P_{\pi}$) is exactly $2n-2$.
\end{remark}
We compute the Lie algebra of the stabilizer of $G(P_{\pi})$. Since $G(P_{\pi})$ $\subset$ $GL(E\otimes F)$, the Lie algebra of $G(P_{\pi})$ is a subalgebra of $\mathfrak{g}\mathfrak{l}(E\otimes F)$. We have the decompsition of $\mathfrak{gl}$ ($E\otimes F$)
\[
\begin{aligned}
\mathfrak{gl}(E\otimes F)
&= End(E\otimes F)\\
&\cong (E\otimes F)^{*} \otimes (E\otimes F)\\
&\cong E^{*} \otimes E \otimes F^{*}\otimes F\\
&\cong (\mathfrak{s}\mathfrak{l}_{R}(E) \oplus Id_{E}\oplus T(E)) \otimes (\mathfrak{s}\mathfrak{l}_{R}(F) \oplus Id_{F}\oplus T(F))\\
&\cong \mathfrak{s}\mathfrak{l}_{R}(E)\otimes \mathfrak{s}\mathfrak{l}_{R}(F)\oplus \mathfrak{s}\mathfrak{l}_{R}(E)\otimes Id_{F}\oplus Id_{E}\otimes \mathfrak{s}\mathfrak{l}_{R}(F)\oplus Id_{E}\otimes Id_{F}\\
&\oplus T(E)\otimes T(F)\oplus T(E)\otimes Id_{F}\oplus Id_{E}\otimes T(F)
\end{aligned}
\]Where $\mathfrak{s}\mathfrak{l}_{R}(E)$ is the root space of $\mathfrak{s}\mathfrak{l}(E)$, $T(E)$ is the torus of $\mathfrak{s}\mathfrak{l}(E)$, and $Id_{E}$ is the space spanned by identity matrix. The similar notation is for $F$. We will show that the Lie algebra of $\{C\in M_{n\times n}\mid P_{\pi}(C*X)=P_{\pi}(X)\}$ is $T(E)\otimes Id_{F}\oplus Id_{E}\otimes T(F)$. Let $\{e_{i}$ $\mid$ $i=1,...,n\}$ be a fixed basis of $E$ and $\{\alpha^{i}$ $\mid$ $i=1,...,n\}$ be the dual basis. Let $H^{E}_{1i}=\alpha^{1}\otimes e_{1}-\alpha^{i}\otimes e_{i}$. Then $\{ H_{1i}$ $\mid i=2,...,n\}$ is a basis of $T(E)$. We use $H^{F}$ for $F$ and define $A_{ij}=H^{E}_{1i}\otimes H^{F}_{1j}$ for all $i\ge 2$, $\j\ge 2$.

Now consider the action of $A_{ij}$ on variable $x_{pq}$.
\[
C_{p,q}^{i,j}:=A_{ij}(x_{pq})=(\delta_{p}^{1}- \delta_{p}^{i})(\delta_{q}^{1}-\delta_{q}^{j})
\]
\begin{eqnarray}C_{p,q}^{i,j}=
\begin{cases}
&1, \;p=q=1\\
&-1, \;p=i, q=1\\
&-1, \;p=1, q=j\\
&1, \;p=i, q=j\\
&0, \;otherwise
\end{cases}
\end{eqnarray}
Equation (\ref{E4}) implies that the matrices $C=(c_{ij})$ that stabilize $P_{\pi}$ is contained in the torus of $GL(E\otimes F)$, hence the Lie algebra of the set of such matrices is contained in the torus of $\mathfrak{gl}(E\otimes F)$, that is, it is contained in $t:=$ $Id_{E}$ $\otimes$ $Id_{F}$ $\oplus$ $T(E)$ $\otimes$ $T(F)$ $\oplus$ $T(E)$ $\otimes$ $Id_{F}$ $\oplus$ $Id_{E}$ $\otimes$ $T(F)$. Now let $L$ be an element of $t$, then $L$ can be expressed as the linear combination of $A_{ij}$'s and $Id_{E}\otimes Id_{F}$. Hence:
\[
L=(aId_{E}\otimes Id_{F}+\sum_{i,j>1}a_{ij}A_{ij} )
\]
for some $a$, $a_{ij}$ $\in$ $\mathbb{C}$.

Then
\begin{eqnarray}
L(x_{pq})=
\begin{cases}
(a+\sum_{i,j>1}a_{ij})x_{11}, &p=q=1\\
(a-\sum_{i>1}a_{iq})x_{1q}, &p=1, q\ne1\\
(a-\sum_{j>1}a_{pj})x_{p1}, &p\ne1, q=1\\
(a+a_{pq})x_{pq}, &p\ne1, q\ne1\\
\end{cases}
\end{eqnarray}
Now for a permutation $\sigma\in \mathfrak{S}_{n}$, the action of $L$ on the monomial $x_{1\sigma(1)}$ $x_{2\sigma(2)}$ $...$ $x_{n\sigma(n)}$ is:\\
if $\sigma(1)=1$,
\begin{equation}\label{EL1}
L(\prod_{p=1}^{n}x_{p\sigma(p)})
=(na+\sum_{i,j>1}a_{ij}+\sum_{p=2}^{n}a_{p\sigma(p)})\prod_{p=1}^{n}x_{p\sigma(p)}. \end{equation}
if $\sigma(1)\ne1$ and $\sigma(k)=1$,
\begin{equation}\label{EL2}
L(\prod_{p=1}^{n}x_{p\sigma(p)})
=(na+\sum_{p\ne1,k}a_{p\sigma(p)}-\sum_{i>1}a_{i\sigma(1)}-\sum_{j>1}a_{kj})\prod_{p=1}^{n}x_{p\sigma(p)}.
\end{equation}
\begin{lemma}\label{S5}
For any solution of the system of linear equations
\begin{eqnarray}
a_{ij}+a_{jk}+a_{km}=&a_{ik}+a_{kj}+a_{jm},\text{where } \{i,j,k,m\}=\{2,3,4,5\}\\
a_{ij}+a_{jk}=&a_{ij^{'}}+a_{j^{'}k}, \text{where } \{i,j,k,j^{'}\}=\{2,3,4,5\}
\end{eqnarray}
There exists a number $\lambda$ such that for any permutation $\mu$ of the set $\{2,3,4,5\}$ moving $\l$ elements,
\begin{equation}
\sum_{\scriptstyle i=2\atop \scriptstyle \mu(i)\ne i}^{5}a_{i\mu(i)}=\l\lambda
\end{equation}
\end{lemma}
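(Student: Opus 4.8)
The plan is to establish the stronger and cleaner fact that every solution of the system has the \emph{separated form}
\[
a_{ij}=\lambda+h_i-h_j,\qquad i,j\in\{2,3,4,5\},\ i\neq j,
\]
for suitable scalars $\lambda,h_2,h_3,h_4,h_5$ (unique once one normalizes $h_2=0$). Granting this, the Lemma follows at once: given a permutation $\mu$ of $\{2,3,4,5\}$ moving $\ell$ elements, write $\mu$ as a product of its nontrivial disjoint cycles, and on a cycle $(i_1\,i_2\,\cdots\,i_r)$ observe that $\sum_{t=1}^{r}a_{i_ti_{t+1}}=r\lambda+\sum_{t=1}^{r}(h_{i_t}-h_{i_{t+1}})=r\lambda$ by telescoping; summing over the cycles gives $\sum_{\mu(i)\neq i}a_{i\mu(i)}=\ell\lambda$. (The separated form visibly satisfies both equations of the system, so this is actually an equivalence, but only one implication is needed.)

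To produce $\lambda$ and the $h_i$, decompose a solution into symmetric and antisymmetric parts, $a_{ij}=\sigma_{ij}+d_{ij}$ with $\sigma_{ij}=\tfrac12(a_{ij}+a_{ji})=\sigma_{ji}$ and $d_{ij}=\tfrac12(a_{ij}-a_{ji})=-d_{ji}$. Both equations of the system are linear, so each splits into a condition on $\sigma$ and a condition on $d$; a direct substitution shows that the first equation becomes
\[
\sigma_{ij}+\sigma_{km}=\sigma_{ik}+\sigma_{jm}\qquad\text{and}\qquad d_{ij}+2d_{jk}+d_{km}=d_{ik}+d_{jm},
\]
while the second becomes $\sigma_{ij}+\sigma_{jk}=\sigma_{ij'}+\sigma_{j'k}$ together with $d_{ij}+d_{jk}=d_{ij'}+d_{j'k}$. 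It therefore suffices to prove: (i) the symmetric conditions force $\sigma$ to be constant, say equal to $\lambda$; and (ii) the antisymmetric conditions force $d$ to be a coboundary, $d_{ij}=h_i-h_j$. Combining (i) and (ii) gives $a_{ij}=\lambda+h_i-h_j$.

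For (i), regard $\sigma$ as an edge-weighting of the complete graph $K_4$ on $\{2,3,4,5\}$. Subtracting a suitable instance of the symmetric form of the first equation from one of the symmetric form of the second cancels two of the terms and collapses to $2\sigma_e=2\sigma_{e'}$ for any two edges $e,e'$ sharing a vertex; since the line graph of $K_4$ is connected, all six values $\sigma_{ij}$ coincide, and we take $\lambda$ to be this common value. For (ii), introduce the cyclic triangle sums $T_{ijk}:=d_{ij}+d_{jk}+d_{ki}$, invariant under cyclic rotation of $(i,j,k)$ and negated by a transposition of two indices. Applying the antisymmetric form of the first equation to $(i,j,k,m)$ and to $(i,k,j,m)$ and subtracting reduces, after dividing by $2$ and using $d_{ji}=-d_{ij}$, to $T_{ijk}+T_{jkm}=0$ (the two triples sharing the edge $\{j,k\}$); and the antisymmetric form of the second equation rewrites, via $d_{ij}+d_{jk}=T_{ijk}+d_{ik}$, simply as $T_{ijk}=T_{ij'k}$. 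Feeding the second relation into the first for suitably chosen triples of $\{2,3,4,5\}$ forces $T_{ijk}=-T_{ijk}$, hence $T_{ijk}=0$ for all triples; vanishing of every triangle sum on a four-element set is precisely the assertion that $d_{ij}=h_i-h_j$ (set $h_2=0$ and $h_i=d_{i2}$ for $i\neq2$).

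The crux is step (ii): a single equation of either type only \emph{propagates} relations among the $T_{ijk}$ without annihilating them, so one must combine the first-equation relation $T_{ijk}+T_{jkm}=0$ with the second-equation relation $T_{ijk}=T_{ij'k}$ in exactly the right order, exploiting the cancellation of a factor of $2$ (harmless over $\mathbb{C}$). Step (i) has the same flavor but is easier, and everything else — the symmetric/antisymmetric splitting, the verification of the reduced forms of the two equations, and the final telescoping over the cycles of $\mu$ — is routine computation.
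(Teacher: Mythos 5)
Your proposal is correct, and it takes a genuinely different (and more informative) route than the paper, whose entire proof is the instruction to solve the linear system directly: you instead identify the solution space as exactly the four-parameter family $a_{ij}=\lambda+h_{i}-h_{j}$ and then obtain the conclusion by telescoping over the nontrivial cycles of $\mu$. I checked the details: your reduced forms of the two equations are right, the symmetric part is indeed forced to be constant (your ``subtract a matching identity from a path identity'' step does give $2\sigma_{e}=2\sigma_{e'}$ for adjacent edges, and connectivity of the line graph of $K_{4}$ finishes), and the antisymmetric reductions are exactly $T_{ijk}+T_{jkm}=0$ and $T_{ijk}=T_{ij'k}$, which together force all triangle sums to vanish, i.e.\ $d_{ij}=h_{i}-h_{j}$; the dimension count ($1+3=4$) confirms nothing is lost. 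The one point you should justify more carefully is the symmetric/antisymmetric splitting: linearity of a single equation does not let you conclude that $\sigma$ and $d$ separately satisfy the reduced conditions, since substitution only gives the sum of the two conditions equal to zero. What makes the splitting legitimate is that the system is closed under transposition --- the instance of the first equation at $(i,j,k,m)$ transposes into the instance at $(m,k,j,i)$, and the instance of the second at endpoints $(i,k)$ into the one at $(k,i)$ --- so with $a$ also $a^{T}$, and hence $\sigma=\tfrac12(a+a^{T})$ and $d=\tfrac12(a-a^{T})$, are solutions; equivalently, add and subtract an instance and its transposed instance. (A cosmetic remark: the instances of the first equation at $(i,j,k,m)$ and $(i,k,j,m)$ are the same equation with its sides exchanged, so your ``subtraction'' there is redundant --- the antisymmetric reduction of a single instance is already $T_{ijk}+T_{jkm}=0$ --- but this is harmless.) Compared with the paper's unexplained computation, your argument buys an explicit description of all solutions, makes the $\ell\lambda$ formula and its converse transparent, and applies verbatim, after relabeling, to every quadruple $\{i,j,k,m\}$, which is how the lemma is used later in the proof of Lemma 4.11.
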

\begin{proof}
check by solving this linear system.
\end{proof}
\begin{lemma}\label{lemma3}
let $n\ge 6$ be an integer, $\pi$ be a fixed partition of $n$, which is not $(1,...,1)$ or $(n)$. Assume that there exists a permutation(so a conjugacy class) $\tau$ $\in$ $\mathfrak{S}_{n}$ such that:
\begin{enumerate}
\item $\chi_{\pi}(\tau)\ne0$;
\item $\tau$ contains a cycle moving at least $4$ numbers;
\item $\tau$ fixes at least $1$ number.
\end{enumerate}
Also assume that $L(P_{\pi})=0$. Then under the above assumptions, $a=a_{ij}=0$ for all $i,j>1$.
\end{lemma}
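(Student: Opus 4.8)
The plan is to convert $L(P_{\pi})=0$ into scalar equations indexed by permutations and to exploit that $\chi_{\pi}$ is a class function. Writing $L=a\,Id_{E}\otimes Id_{F}+\sum_{i,j>1}a_{ij}A_{ij}$ as above, let $(m_{pq})$ be the matrix determined by $L(x_{pq})=m_{pq}x_{pq}$. From the displayed formulas for $L(x_{pq})$ one reads off at once that every row sum and every column sum of $(m_{pq})$ equals $na$, and that the scalar multiplying the monomial $\prod_{p}x_{p\sigma(p)}$ in $(\ref{EL1})$ and in $(\ref{EL2})$ equals $\sum_{p=1}^{n}m_{p\sigma(p)}$. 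Hence $L(P_{\pi})=0$ is equivalent to $\sum_{p=1}^{n}m_{p\sigma(p)}=0$ for every $\sigma$ with $\chi_{\pi}(\sigma)\neq 0$; since $\chi_{\pi}$ is a class function this applies, in particular, to $\sigma=e$ (as $\chi_{\pi}(e)=\dim[\pi]\neq 0$) and to every conjugate of $\tau$, which are the only instances the argument will use.

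I would first show $a=0$. The equation for $\sigma=e$ says $\operatorname{tr}(m_{pq})=0$. Summing $\sum_{p}m_{p\sigma(p)}=0$ over all $\sigma$ in the conjugacy class $C$ of $\tau$ and using that $\#\{\sigma\in C:\sigma(p)=q\}$ depends only on whether $p=q$, one obtains $(\alpha-\beta)\operatorname{tr}(m_{pq})+\beta\sum_{p,q}m_{pq}=0$ for integers $\alpha,\beta$ with $\beta>0$ (here $\beta>0$ because $\tau\neq e$, which holds since $\tau$ moves at least four elements). As $\operatorname{tr}(m_{pq})=0$ and $\sum_{p,q}m_{pq}=n\cdot na=n^{2}a$, this forces $a=0$; then $m_{pq}=a_{pq}$ for $p,q\ge 2$, $m_{11}=\sum_{i,j>1}a_{ij}=:T$, $m_{1q}=-\sum_{i>1}a_{iq}$, $m_{p1}=-\sum_{j>1}a_{pj}$, and $(m_{pq})$ has zero row and column sums.

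The heart of the proof is local. For a $4$-element subset $S\subseteq\{2,\dots,n\}$, hypothesis $(3)$ lets us conjugate $\tau$ so a fixed point lands at $1$, and hypothesis $(2)$ --- a cycle of length $\ge 4$ --- lets us then arrange that cycle to carry the four elements of $S$ in any prescribed cyclic order, everything else unchanged; every such conjugate fixes $1$, so its equation is the clean relation $(\ref{EL1})$, namely $T+\sum_{p\ge 2}a_{p\sigma(p)}=0$. Subtracting two such equations whose conjugates agree off $S$ and differ only by a rerouting inside $S$ (and, for the second family, by also swapping a further fixed point of $\tau$ into the long cycle in place of the middle element of $S$) kills all terms off $S$ and yields, for the $a_{pq}$ with $p,q\in S$, exactly the two families of relations appearing in Lemma $\ref{S5}$ after relabelling $S$ as $\{2,3,4,5\}$. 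Lemma $\ref{S5}$ then provides, for each $S$, a scalar $\lambda_{S}$ with $\sum_{i\in S,\ \mu(i)\ne i}a_{i\mu(i)}=\ell\lambda_{S}$ for every $\mu$ permuting $S$ and moving $\ell$ elements; in particular $a_{ij}+a_{ji}=2\lambda_{S}$ and $a_{ij}+a_{jk}+a_{ki}=3\lambda_{S}$ for indices in $S$. Since $n-1\ge 5$, any two $4$-subsets are joined by a chain of $4$-subsets sharing a pair, so $a_{ij}+a_{ji}=2\lambda_{S}$ forces $\lambda_{S}$ to equal a single constant $\lambda$; the triangle relations then make the antisymmetric part of $(a_{ij})$ a coboundary, $a_{ij}=\lambda+\beta_{i}-\beta_{j}$ for $i\ne j$, and the swap relations force all diagonal entries $a_{pp}$ to a common value $d$. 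The $\sigma=e$ equation $\sum_{p>1}a_{pp}+\sum_{i,j>1}a_{ij}=0$ now gives one linear relation between $d$ and $\lambda$; a second comes from the conjugates of $\tau$ that move $1$: placing $1$ inside the length-$\ge 4$ (hence length-$\ge 3$) cycle with predecessor $q$ and successor $p$, the contributions in $(\ref{EL2})$ telescope and collapse the whole eigenvalue to $c_{0}+n(\beta_{p}-\beta_{q})$ for a constant $c_{0}$ depending only on $d,\lambda,n$. Vanishing of the eigenvalue for all such $\sigma$ forces $\beta_{p}=\beta_{q}$ for all distinct $p,q>1$, and then $c_{0}=0$ is the second relation. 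Because $\tau$ fixes at least one element (hypothesis $(3)$), the resulting $2\times 2$ system in $d,\lambda$ has determinant $nf\ne 0$, where $f\ge 1$ is the number of fixed points of $\tau$, so $d=\lambda=0$; hence all $a_{ij}=0$, which with $a=0$ is the claim.

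The step I expect to be the main obstacle is the local one: showing that conjugates of a single conjugacy class $\tau$, constrained only to have a cycle of length $\ge 4$ and a fixed point, generate --- uniformly in the $4$-subset $S$ --- \emph{both} families of relations needed for Lemma $\ref{S5}$. The rerouting family is immediate, but isolating the second one without leftover diagonal terms $a_{jj}$ needs either a spare fixed point of $\tau$ to swap through the long cycle or, when $\tau$ has a unique fixed point, an additional argument combining the $\sigma(1)=1$ equations with the $\sigma(1)\neq 1$ ones (which bring in the row and column sums $\sum_{i}a_{iq}$, $\sum_{j}a_{pj}$); carrying this out for every cycle type of $\tau$ is the delicate bookkeeping.
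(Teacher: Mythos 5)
Your overall architecture is the same as the paper's: produce the two families of linear relations that form the hypotheses of Lemma \ref{S5}, invoke that lemma to get the $\ell\lambda$ structure on the off-diagonal $a_{ij}$, pin down the diagonal entries, and finish with a small linear system coming from the identity permutation and from conjugates of $\tau$. Two pieces of your write-up are genuine improvements over the paper: averaging $\sum_{p}m_{p\sigma(p)}=0$ over the conjugacy class of $\tau$ to get $a=0$ up front (the paper only extracts $a=0$ at the very end from a $3\times 3$ system), and the normal form $a_{ij}=\lambda+\beta_{i}-\beta_{j}$, which makes the telescoping in the $(\ref{EL2})$ equations transparent.

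However, the step you yourself flag as "the main obstacle" is a genuine gap, not deferred bookkeeping. The rerouting family comes for free from conjugates fixing $1$, but the second family $a_{ij}+a_{jk}=a_{ij'}+a_{j'k}$ cannot be obtained from $(\ref{EL1})$-type conjugates alone: any comparison that exchanges the roles of $j$ and $j'$ (one inside the long cycle, one elsewhere) leaves the uncancelled residue attached to $j$ and $j'$ themselves --- $a_{jj}-a_{j'j'}$ if the displaced element sits at a fixed point, or $a_{xj}+a_{jy}$ versus $a_{xj'}+a_{j'y}$ if it sits in another cycle --- and a ``spare fixed point'' does not remove this, since the residue is not attached to the auxiliary element. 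At that stage of your argument you do not yet know the diagonal entries are equal (you deduce that \emph{from} Lemma \ref{S5}, whose hypotheses include the second family), so the argument is circular as written. The paper's route is to derive this family from pairs of conjugates in which $1$ sits \emph{inside} the long cycle, so that formula $(\ref{EL2})$ applies: there the extra terms $-\sum_{i>1}a_{i\sigma(1)}-\sum_{j>1}a_{\sigma^{-1}(1)j}$ depend only on $\sigma(1)$ and $\sigma^{-1}(1)$ and cancel between the two permutations, yielding $a_{25}+a_{54}=a_{23}+a_{34}$ with no diagonal residue. You do use the $(\ref{EL2})$ equations, but only at the very end (to force $\beta_{p}=\beta_{q}$); they are needed already in the local step. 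Until the second family is actually established for every $4$-subset, Lemma \ref{S5} does not apply and everything downstream --- the coboundary form of $a_{ij}$, the equality of the diagonal, the final $2\times 2$ system in $d,\lambda$ --- has no foundation.
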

\begin{proof}
$L(P_{\pi})=0$ means that $L$$(\prod_{p=1}^{n}x_{p\sigma(p)})$ for all $\sigma\in\mathfrak{S}_{n}$ such that $\chi_{\pi}(\sigma)\ne0$. Consider permutations $(2345...)$ $...$ $(...)$ and $(2435...)$ $...$ $(...)$(all cycles are the same except the first one, and for the first cycle, all numbers are the same except the first $4$), from formula (\ref{EL1}), we have:
\begin{equation}
na+\sum_{i,j>1}a_{ij}+a_{23}+a_{34}+a_{45}+E=0
\end{equation}
\begin{equation}
na+\sum_{i,j>1}a_{ij}+a_{24}+a_{43}+a_{35}+E=0
\end{equation}
for some linear combination $E$ of $a_{ij}$'s. Thus
\begin{equation}
a_{23}+a_{34}+a_{45}=a_{24}+a_{43}+a_{35}
\end{equation}
Similarly,
\begin{equation}
a_{ij}+a_{jk}+a_{km}=a_{ik}+a_{kj}+a_{jm}\text{, for $i,j,k,m$ distinct}.
\end{equation}
Next, consider permutations $(1234...k)...(...)$ and $(1254...k)...(...)$, again, from formula (\ref{EL2}), we obtain:
\begin{equation}
na+a_{23}+a_{34}+E'-\sum_{i>1}a_{i,2}-\sum_{j>1}a_{k,j}=0
\end{equation}
\begin{equation}
na+a_{25}+a_{54}+E'-\sum_{i>1}a_{i,2}-\sum_{j>1}a_{k,j}=0
\end{equation}
Hence,
\begin{equation}
a_{25}+a_{54}=a_{23}+a_{34}
\end{equation}
and thus
\begin{equation}
a_{ij}+a_{jk}=a_{ij^{'}}+a_{j^{'}k}\text{ ,for all $i,j,j',k$ distinct}.
\end{equation}
Now for $2\le i<j<k<m\le n$, we have system of linear equations of the same  form as Lemma \ref{S5}. So we have relations:
\[
\sum_{\scriptstyle m\ge p\ge i\atop \scriptstyle \mu(p)\ne p}a_{p\mu(p)}=\l \lambda_{ijkm}
\]
where $\mu$ is a permutation of the set $\{i,j,k,m\}$, $\l$ is the number of elements moved by $\mu$, and $\lambda_{ijkm}$ is a constant number.

It is easy to see that $\lambda_{ijkm}$ is the same for different choices of the set $\{2\le i< j< k< m\le n\}$, for example, we can compare $\{i,j,k,m\}$ and $\{i,j,k,m^{'}\}$ to obtain $\lambda_{ijkm}=\lambda_{ijkm^{'}}$. From now on, we write all $\lambda_{ijkm}$'s as $\lambda$. Hence, given any permutation of the set $\{2,...,n\}$ moving $\l$ elements,
\begin{equation}\label{relation}
\sum_{\scriptstyle n\ge p\ge 2\atop \scriptstyle \mu(p)\ne p}a_{p\mu(p)}=\l\lambda
\end{equation}
Next, we find relations among the $a_{ii}$'s for $i\ge 2$. For this purpose, consider $\tau_{1}=(243...k)...(...)$ and $\tau_{2}=(253...k)...(...)$, then
\begin{eqnarray}
&na+\sum_{i,j>1}a_{ij}+a_{24}+...+a_{k2}+a_{55}+(\text{sum of $a_{ii}$'s for $i\ne5$ fixed by $\tau_{1}$})=0\\
&na+\sum_{i,j>1}a_{ij}+a_{25}+...+a_{k2}+a_{44}+(\text{sum of $a_{ii}$'s for $i\ne4$ fixed by $\tau_{1}$})=0
\end{eqnarray}
Combine these two equations and equation (\ref{relation}) to obtain
\[
a_{44}=a_{55}.
\]
The same argument implies that $a_{ii}=a_{55}$ for all $n\ge i\ge2$. Now we have:
\[
\sum_{i,j>1}a_{ij}=\sum_{1<i<j}(a_{ij}+a_{ij})+2(n-1)a_{55}=(n-1)(n-2)\lambda+2(n-1)a_{55}
\]
Let $\sigma=(1)$, formula(\ref{EL1}) implies
\begin{equation}\label{equation1}
na+(n-1)(n-2)\lambda+3(n-1)a_{55}=0
\end{equation}
Let $\sigma=(2345...)...(...)$($\sigma(1)=1$), again by formula (\ref{EL1})
\begin{equation}\label{equation2}
na+((n-2)(n-1)+\l)\lambda+(3(n-1)-\l)a_{55}=0
\end{equation}
where $\l$ is the number of elements moved by $\sigma$.
Let $\sigma_{1}=(123...p4)...(...)$ and $\sigma_{2}=(143...p2)...(...)$. Then formula (\ref{EL2}) gives:

\begin{eqnarray}\label{E5}
0=&na+(a_{23}+...+a_{p4})+\tilde{E} -\sum_{i>1}a_{i2}-\sum_{j>1}a_{4j}\\
\label{E6} 0=&na+(a_{43}+...+a_{p2})+\tilde{E}-\sum_{i>1}a_{i4}-\sum_{j>1}a_{2j}
\end{eqnarray}
Note that $\tilde{E}$ comes from the product of disjoint cycles in $\sigma_{1}$ and $\sigma_{2}$ except the first one, so they are indeed the same, and if we assume that $\sigma_{1}$ moves $\l^{'}$ elements, and the first cycle in $\sigma_{1}$ moves $r$ elements, then $\tilde{E}$ $=(\l^{'}-r)\lambda+(n-\l^{'})a_{55}$. On the other hand,
\[
\begin{aligned}
a_{23}+...+a_{p4}&=a_{23}+...+a_{p4}+a_{42}-a_{42}=r\lambda-a_{42}\\
a_{43}+...+a_{p2}&=a_{43}+...+a_{p2}+a_{24}-a_{24}=r\lambda-a_{24}
\end{aligned}
\]
Equations (\ref{E5}) and (\ref{E6}) gives:
\begin{equation}\label{equation3}
na+(\l^{'}+2n-5)\lambda+(n-\l^{'}+2)a_{55}=0
\end{equation}
Now equations (\ref{equation1}), (\ref{equation2}), and (\ref{equation3}) imply that $\lambda=a=a_{55}=0$. From equation (\ref{E5}), we have:
\[
a_{42}=\sum_{j>1}a_{4j}+\sum_{i>1}a_{i2}
\]
similarly,
\[
a_{k2}=\sum_{j>1}a_{kj}+\sum_{i>1}a_{i2}\text{ for all } n\ge k\ge 2
\]
The sum of these equations is:
\[
\sum_{i>1}a_{i2}=(n-1)\sum_{i>1}a_{i2}+\sum_{i,j>1}a_{ij}=(n-1)\sum_{i>1}a_{i2}.
\]
Hence $\sum_{i>1}a_{i2}=0$. For the same reason $\sum_{j>1}a_{4j}=0$, therefore $a_{42}=0$. By the same argument, $a_{ij}=0$ for all $n\ge i\ne j \ge 2$, and this completes the proof of the lemma.
\end{proof}
\begin{lemma}\label{lemma4}
If $n\ge 6$, then for any partition $\lambda$ of $n$, except $\lambda=(3,1,1,1)$, $\lambda=(4,1,1)$ and $\lambda=(4,1,1,1)$, there exists a permutation $\tau\in\mathfrak{S}_{n}$ satisfying conditions (1), (2), (3) in Lemma \ref{lemma3}.
\end{lemma}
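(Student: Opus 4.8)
The plan is to exhibit, for every admissible $\lambda$, one explicit conjugacy class $\tau$ meeting conditions (2) and (3) of Lemma \ref{lemma3}, and then to evaluate $\chi_\lambda(\tau)$ by the Murnaghan--Nakayama rule and see it is nonzero. I would split the argument according to whether $\lambda$ is a hook.

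Suppose first that $\lambda$ is not a hook, i.e. $\lambda_2\ge 2$. Set $r:=\lambda_1+\lambda_1'-1$, the number of boxes on the rim of the Young diagram of $\lambda$, and let $\tau$ have cycle type $(r,1^{n-r})$ (an $r$-cycle together with $n-r$ fixed points). Since $\lambda_1,\lambda_1'\ge 2$, and for $n\ge 6$ they cannot both equal $2$, we get $r\ge 4$; and $n-r=\sum_{i\ge 2}(\lambda_i-1)\ge\lambda_2-1\ge 1$, so $\tau$ has a cycle of length $\ge 4$ and at least one fixed point. For condition (1) I would use the standard fact that every border strip of $\lambda$ lies inside the rim of $\lambda$; as the rim is itself a border strip with exactly $r$ boxes, it is the unique border strip of size $r$ in $\lambda$. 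Hence the Murnaghan--Nakayama expansion of $\chi_\lambda(\tau)$ collapses to the single term $\pm\dim M_{\lambda^-}$, with $\lambda^-=(\lambda_2-1,\lambda_3-1,\dots)$, and this is nonzero because $\lambda^-\ne\varnothing$ and every irreducible has positive dimension.

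Now suppose $\lambda=(n-k,1^{k})$ is a hook. Then $M_\lambda\cong\Lambda^{k}V$ for the standard $(n-1)$-dimensional representation $V$, so $\chi_\lambda(\sigma)$ is the $k$-th elementary symmetric function of the eigenvalues of $\sigma$ acting on $V$; these are the eigenvalues of $\sigma$ on $\mathbb{C}^{n}$ (for each $c$-cycle, all $c$-th roots of unity) with one eigenvalue $1$ removed. Taking $\tau$ of cycle type $(4,1^{n-4})$ and assembling the generating function gives $\sum_{k}\chi_{(n-k,1^{k})}(\tau)\,x^{k}=(1-x^{4})(1+x)^{n-5}$, so $\chi_\lambda(\tau)=\binom{n-5}{k}-\binom{n-5}{k-4}$. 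Since $\binom{N}{a}=\binom{N}{b}$ with $0\le a,b\le N$ forces $a=b$ or $a+b=N$, and since $\binom{n-5}{k}$ and $\binom{n-5}{k-4}$ can both be zero for some $0\le k\le n-1$ only when $n\le 7$, this character vanishes over $0\le k\le n-1$ exactly on $\{k=\tfrac{n-1}{2}\}$ (vacuous for $n$ even) when $n\ge 8$, on $\{k=2,3\}$ when $n=6$, and on $\{k=3\}$ when $n=7$. The pairs $(n,k)=(6,2),(6,3),(7,3)$ are precisely the excluded partitions $(4,1,1),(3,1,1,1),(4,1,1,1)$, so nothing further is required there. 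For the only remaining case, $n\ge 9$ odd with $\lambda=\bigl(\tfrac{n+1}{2},1^{(n-1)/2}\bigr)$, I would instead take $\tau$ of cycle type $(4,2,1^{n-6})$, still admissible because $n-6\ge 2$ leaves a fixed point; the same computation yields $\sum_{k}\chi_{(n-k,1^{k})}(\tau)\,x^{k}=(1-x^{4})(1-x^{2})(1+x)^{n-7}$, and at $k=\tfrac{n-1}{2}$ the coefficient collapses, via $\binom{N}{j}=\binom{N}{N-j}$, to $2\bigl(\binom{n-7}{(n-7)/2-3}-\binom{n-7}{(n-7)/2-1}\bigr)$, which is strictly negative by unimodality. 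Finally, for $\lambda=(n)$ and $\lambda=(1^{n})$ the character is nowhere zero, so any admissible $\tau$ will do.

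The conceptual step I expect to matter most is the non-hook case: the correct idea is to strip off the entire rim at once, and the uniqueness of the maximal border strip is exactly what makes Murnaghan--Nakayama reduce to one, automatically nonzero, term. The most delicate bookkeeping lies in the hook case --- pinning down the small exceptional set on which $(4,1^{n-4})$ fails, and checking that the one genuinely recoverable failure (the odd-$n$ middle hook) is rescued by $(4,2,1^{n-6})$, which needs the second generating-function identity together with a unimodality estimate.
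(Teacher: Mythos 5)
Your argument is correct, and in the non-hook case it coincides with the paper's: when $\lambda_2\ge 2$ the paper also takes $\tau=(\xi,1^{n-\xi})$ with $\xi$ the length of the longest skew hook, which is exactly your rim length $\lambda_1+\lambda_1'-1$; your added observation that this is the \emph{unique} border strip of maximal size (so Murnaghan--Nakayama collapses to $\pm\dim M_{\lambda^-}\ne 0$) is the justification the paper leaves implicit. Where you genuinely diverge is the hook case. The paper first reduces to $p\ge\lambda_1$ by conjugating, then argues case by case with several different cycle types ($(p-1,1^{n-p+1})$, $(4,1^{n-4})$, $(p-2,1^{n-p+2})$) and falls back on a character-table check for the self-conjugate hook with $p=\lambda_1=5$ (that is, $n=9$), without spelling out the character evaluations. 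You instead use $M_{(n-k,1^k)}\cong\Lambda^k V$ and the generating function $\sum_k\chi_{(n-k,1^k)}(\tau)x^k=\prod_i\bigl(1-(-x)^{c_i}\bigr)/(1+x)$, which for $\tau=(4,1^{n-4})$ gives $\binom{n-5}{k}-\binom{n-5}{k-4}$ and lets you determine the vanishing set exactly: the three excluded partitions $(4,1,1)$, $(3,1,1,1)$, $(4,1,1,1)$ plus the self-conjugate middle hooks for odd $n\ge 9$, which you then rescue uniformly with $(4,2,1^{n-6})$ and a unimodality estimate (your formula $2\bigl(\binom{n-7}{(n-7)/2-3}-\binom{n-7}{(n-7)/2-1}\bigr)$ checks out, e.g.\ it gives $-2$ for $n=9$). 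Your route buys a uniform, fully explicit hook computation with no appeal to character tables and no need for the conjugation reduction, and it makes transparent why exactly those three partitions are excluded; the paper's route avoids the generating-function machinery but at the cost of more cases and unverified character values. Both satisfy conditions (2) and (3) by construction, so the lemma follows either way.
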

\begin{proof}
Write $\lambda=(\lambda_{1},\lambda_{2},...,\lambda_{p})$ where $\lambda_{1}\ge\lambda_{2}\ge...\lambda_{p}\ge 1$ and $\sum_{i=1}^{p}\lambda_{i}=n$. Without loose of generality, we may assume $p\ge \lambda_{1}$, otherwise, we can consider the conjugate $\lambda^{'}$ of $\lambda$. There exists a largest integer $m$ such that the Young diagram of $\lambda$ contains an $m\times m$ square.

Now we will construct $\tau$ using the Murnagham-Nakayama Rule (See \cite{MR1824028})case by case:
\begin{enumerate}
\item If $m=1$ then $\lambda$ is a hook:$(\lambda_{1},1,...,1)$, there are the following cases:
\begin{enumerate}
\item $p>\lambda_{1}$ and $\lambda_{1}\ge 4$. Take $\tau=(p-1,1^{n-p+1})$ then $\chi_{\lambda}(\tau)\ne 0$ by the Murnagham-Nakayama Rule. In this case, $n\ge 8$.
\item $p>\lambda_{1}$ and $\lambda_{1}=1$. This case is trivial.
\item $p>\lambda_{1}$ and $\lambda_{1}=2$ or $3$. $\tau=(4,1^{n-4})$ will work.
\item $p=\lambda_{1}$. Take $\tau=(p-1,1^{n-p+1})$ if $p\ge 6$ is even and $\tau=(p-2,1^{n-p+2})$ if $p\ge 7$ is odd. In this case $n\ge 11$.
\item $p=\lambda_{1}=5$. $\tau$ exists by checking the character tables.
\end{enumerate}
\item If $m\ge2$, let $\xi$ be the length of the longest skew hook contained in the young diagram of $\lambda$. Then take $\tau=(\xi,1^{n-\xi})$.
\end{enumerate} \end{proof}
\begin{proof}[\textbf{proof of Theorem \ref{theorem1}}]
For the case $n=5$, one can check directly. By Lemma \ref{lemma3} and lemma \ref{lemma4}, we know that for $n\ge 6$ and $\pi$ not equal to $(3,1,1,1)$ and $(4,1,1,1)$, the lie algebra of $\{C\in M_{n\times n}\mid P_{\pi}(C*X)=P_{\pi}(X)\}$ is $T(E)\otimes Id_{F}\oplus Id_{E}\otimes T(F)$, so the identity component of $\{C\in M_{n\times n}\mid P_{\pi}(C*X)=P_{\pi}(X)\}$ is $T(GL(E)\times GL(F))$, and hence the identity component of $G(P_{\pi})$ is $\Delta(\mathfrak{S}_{n})$ $\ltimes T(GL(E)\times GL(F))$ $\ltimes \mathbb{Z}_{2}$. For cases $\pi=(3,1,1,1)$, $\pi=(4,1,1)$ the statement is true by Theorem \ref{theorem2}.
\end{proof}
By investigating the equation ($\ref{E4}$), we can give a sufficient condition for the stabilizer of $P_{\pi}$ to be $\Delta(\mathfrak{S}_{n})$ $\ltimes T(GL(E)\times GL(F))$ $\ltimes \mathbb{Z}_{2}$ as follows:
\begin{lemma}\label{prop}
Let $\pi$ be a partition of $n$ which is not $(1,...,1)$ or $(n)$. Assume that there exist permutations $\sigma$, $\tau$ $\in$¡¡$\mathfrak{S}_{n}$ and an integer $p\ge2$, such that $\chi_{\pi}((i_{1}...i_{p})\sigma)$ $\ne$ $0$, $\chi_{\pi}((1i_{1}...i_{p})\sigma)$ $\ne$ $0$, $\chi_{\pi}(\tau)$ $\ne$ $0$ and $\chi_{\pi}((ij)\tau)$ $\ne$ $0$, where $(i_{1}...i_{p})$ and $(1i_{1}...i_{p})$ are cycles disjoint from $\sigma$, and $(ij)$ is disjoint from $\tau$. Then the stabilizer of $P_{\pi}$ is $\Delta(\mathfrak{S}_{n})$ $\ltimes T(GL(E)\times GL(F))$ $\ltimes \mathbb{Z}_{2}$.
\end{lemma}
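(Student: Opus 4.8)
The plan is to reduce to the multiplicative system (\ref{E4}) and then use the four prescribed non-vanishing values of $\chi_{\pi}$ to force the matrix $(c_{ij})$ to have rank one. By Theorem~\ref{Duffner} and Proposition~\ref{Coelho} (the reduction leading to equation (\ref{E4}), valid for $n\ge 5$), $G(P_{\pi})$ consists of the transpose $\mathbb{Z}_{2}$ together with the triples $(\tau,\tau,(c_{ij}))$, $\tau\in\mathfrak{S}_{n}$, for which all $c_{ij}\ne 0$ and $\prod_{i=1}^{n}c_{i\sigma(i)}=1$ for every $\sigma$ with $\chi_{\pi}(\sigma)\ne 0$. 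Any $C=(r_{i}s_{j})$ with $\prod_{i}r_{i}\prod_{j}s_{j}=1$ satisfies (\ref{E4}), and these are exactly the matrices coming from $T(GL(E)\times GL(F))$, so it suffices to prove the converse: every solution $(c_{ij})$ of (\ref{E4}) with nonzero entries has this form.

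Fix such a solution. Since $\chi_{\pi}$ is a class function, for every $\rho\in\mathfrak{S}_{n}$ the permutations
\begin{gather*}
\bigl(\rho(i_{1})\cdots\rho(i_{p})\bigr)\,\rho\sigma\rho^{-1},\qquad
\bigl(\rho(1)\,\rho(i_{1})\cdots\rho(i_{p})\bigr)\,\rho\sigma\rho^{-1},\\
\rho\tau\rho^{-1},\qquad
\bigl(\rho(i)\,\rho(j)\bigr)\,\rho\tau\rho^{-1}
\end{gather*}
again have nonzero $\chi_{\pi}$, with the inserted cycle disjoint from the rest in each conjugated pair. Dividing, in (\ref{E4}), the equation for the first permutation by the one for the second: these two permutations agree outside rows $\rho(1)$ and $\rho(i_{p})$, so all factors cancel except four, giving $c_{\rho(1)\rho(1)}\,c_{\rho(i_{p})\rho(i_{1})}=c_{\rho(1)\rho(i_{1})}\,c_{\rho(i_{p})\rho(1)}$. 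Since $p\ge 2$ the indices $1,i_{1},i_{p}$ are distinct, and $\rho$ may carry them to an arbitrary ordered triple of distinct indices, so
\begin{equation}\label{Erankone}
c_{aa}\,c_{cb}=c_{ab}\,c_{ca}\qquad\text{for all distinct }a,b,c.
\end{equation}
Dividing the equation for $\rho\tau\rho^{-1}$ by the one for $(\rho(i)\,\rho(j))\,\rho\tau\rho^{-1}$ in the same way gives
\begin{equation}\label{Eranktwo}
c_{aa}\,c_{bb}=c_{ab}\,c_{ba}\qquad\text{for all distinct }a,b.
\end{equation}

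It remains to combine (\ref{Erankone}) and (\ref{Eranktwo}). Fix distinct columns $a,b$. By (\ref{Erankone}) the ratio $c_{xa}/c_{xb}$ equals $c_{aa}/c_{ab}$ for $x=a$ and for every $x\ne a,b$, hence for all $x\ne b$; and (\ref{Eranktwo}) gives $c_{ba}/c_{bb}=c_{aa}/c_{ab}$ as well, so $c_{xa}/c_{xb}$ is independent of the row $x$. Taking $b$ to be the first column and setting $r_{i}:=c_{i1}$, $s_{1}:=1$, $s_{j}:=c_{ij}/c_{i1}$ for $j\ge 2$ (well defined and nonzero, by the previous sentence), we get $c_{ij}=r_{i}s_{j}$ for all $i,j$; substituting into (\ref{E4}) for $\sigma=\mathrm{id}$ (note $\chi_{\pi}(\mathrm{id})=\dim M_{\pi}>0$) gives $\prod_{i}r_{i}\prod_{j}s_{j}=1$. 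Hence the solution set of (\ref{E4}) is exactly $T(GL(E)\times GL(F))$, and $G(P_{\pi})=\Delta(\mathfrak{S}_{n})\ltimes T(GL(E)\times GL(F))\ltimes\mathbb{Z}_{2}$.

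I do not expect a genuine obstacle: the hypotheses are tailored to make these two divisions produce (\ref{Erankone}) and (\ref{Eranktwo}). The two points requiring care are the cancellation bookkeeping — one must check that disjointness of the inserted cycle from $\sigma$, resp.\ from $\tau$, really leaves only the four displayed entries — and the final combinatorial step, where (\ref{Erankone}) alone controls $c_{xa}/c_{xb}$ only for the rows $x\ne b$ while (\ref{Eranktwo}) supplies exactly the missing row $x=b$.
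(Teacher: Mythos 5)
Your proof is correct and follows essentially the same route as the paper: divide pairs of the product equations (\ref{E4}) coming from $(i_1\cdots i_p)\sigma$ versus $(1i_1\cdots i_p)\sigma$ and from $\tau$ versus $(ij)\tau$ to isolate four-entry relations forcing $(c_{ij})$ to be rank one, hence of the form $AXB$ with $\det(AB)=1$. The only difference is that the paper works out just the case $\sigma=(1)$, $p=2$ and declares the rest similar, whereas you handle the general case explicitly by conjugating with $\rho$ and then carefully patch together the two relations; this is a welcome filling-in of detail rather than a different method.
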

\begin{proof}
For convenience, we will show for the case $\sigma=(1)$ and $p=2$, the other cases are similar. In equations \ref{E4}, let $\sigma=(ij)$ and $(1ij)$, where $1< i,j\leqslant n$ and $i\ne j$. Then
\begin{equation}
c_{ji}c_{ij}c_{11}\prod_{k\ne 1,i,j}c_{kk}=1
\end{equation}
\begin{equation}
c_{ji}c_{1j}c_{i1}\prod_{k\ne 1,i,j}c_{kk}=1
\end{equation}
So $c_{ij}=\frac{c_{i1}c_{1j}}{c_{11}}$. Similarly, the existence of $\tau$ will give the relation $c_{ii}=\frac{c_{i1}c_{1i}}{c_{11}}$ for all $1\le i \le n $. Set
\[\mathbf{A}=
\begin{pmatrix}a_{1}&0&\dots&0\\
      0&a_{2}&\dots&0\\
      \vdots&\vdots&\ddots&\vdots\\
      0&0&\dots&a_{n}
      \end{pmatrix},
\mathbf{B}=\begin{pmatrix}b_{1}&0&\dots&0\\
      0&b_{2}&\dots&0\\
      \vdots&\vdots&\ddots&\vdots\\
      0&0&\dots&b_{n}
      \end{pmatrix}
\]\\
Where $a_{ii}=c_{i1}$ and $b_{jj}=\frac{c_{1j}}{c_{11}}$. So we have \[C\ast X=AXB, \text{ with } det(AB)=1.\]
\end{proof}

The following two propositions guarantee the existence of permutations satisfied conditions in Lemma (\ref{prop}).
\begin{proposition}\label{3.1}
Let $n\ge 3$, and $\pi$ be a non-symmetric partition of $n$, then there exists nonnegative integers $k_{1}$,..., $k_{r}$ such that $k_{1}$ $+...+$ $k_{r}$ $=$ $n-2$, such that $|(\chi_{\pi}(\tau))|=1$, where $\tau$ is a permutation of type $(k_{1}$,..., $k_{r},1^{2})$ or $(k_{1}$,..., $k_{r},2)$.
\end{proposition}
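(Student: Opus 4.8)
The plan is to manufacture the conjugacy class from a chain of border-strip removals and read off the value from the Murnaghan--Nakayama rule. First, since replacing $\pi$ by its conjugate $\pi'$ multiplies $\chi_{\pi}$ by the sign character and changes neither $|\chi_{\pi}(\tau)|$ nor the admissible cycle-type shapes $(k_{1},\dots,k_{r},1^{2})$, $(k_{1},\dots,k_{r},2)$, I may transpose $\pi$ whenever convenient, keeping $\pi\neq\pi'$. The sufficient condition I would isolate is: if $B_{1},\dots,B_{s}$ are border strips such that, removing them from $\pi$ in this order, each $B_{t}$ is the \emph{unique} removable border strip of size $|B_{t}|$ from the partition that remains after $B_{1},\dots,B_{t-1}$, and the partition left after all removals has exactly two boxes, then the last steps (deleting a domino, or deleting two single boxes, from a two-box partition) are forced, so the border-strip tableau of shape $\pi$ with content $(|B_{1}|,\dots,|B_{s}|,2)$ (resp.\ $(|B_{1}|,\dots,|B_{s}|,1^{2})$) is unique and $\chi_{\pi}$ takes the value $\pm1$ on a permutation of that cycle type. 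So it suffices to exhibit such a chain for every non-symmetric $\pi\vdash n\ge3$.

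For the chain, the natural engine is iterated removal of the rim (outer border strip): the rim of any $\lambda$ is a connected border strip of size $\lambda_{1}+\lambda_{1}'-1$, is the unique removable border strip of that size, and its removal sends $\lambda=(\lambda_{1},\dots,\lambda_{\ell})$ to $(\lambda_{2}-1,\dots,\lambda_{\ell}-1)$; iterating walks $\pi$ through its principal (main-diagonal) hooks $h_{1}>\dots>h_{d}$, $d$ being the Durfee-square size. If $h_{d}=2$ we are done at once: the whole principal-hook peeling is a unique tiling, so $\chi_{\pi}=\pm1$ on the admissible type $(h_{1},\dots,h_{d-1},2)$. Otherwise, let $j$ be the least index with $\pi_{j}-j\neq\pi_{j}'-j$ (this exists since $\pi\neq\pi'$) and transpose so that $\pi_{j}-j>\pi_{j}'-j$; peel off the first $j-1$ principal hooks and then peel the remaining partition $\rho$ (whose first diagonal hook has strictly longer arm than leg) down to two boxes — continuing with rim removals while $\rho$ is big, and, once $\rho$ is a hook $(a,1^{c})$, removing a horizontal strip and then a vertical strip, the arm/leg inequality letting these be chosen forced.

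The one real difficulty — and the step I expect to be the main obstacle — is \emph{overshooting}: for a few small tails the partition below the first diagonal hook has one or zero boxes, so a full rim removal would skip past the two-box target. These are the configurations producing hook tails $(1),(2),(2,1),(3)$ and the allied Durfee-two tails $(b,2,1^{e})$; for each one must instead pick an explicit shorter border strip keeping the chain forced (or keeping the residual Murnaghan--Nakayama sum equal to $\pm1$) — a representative instance being $\chi_{(4,3,2)}\big((5,2,2)\big)=-1$. It is exactly here that $\pi\neq\pi'$ is indispensable: for symmetric $\pi$ these same tails (e.g.\ $(2,2)$, $(2,1)$, $(3,2,1)$) force $\chi_{\pi}$ away from $\pm1$ on \emph{every} admissible cycle type, whereas for $\pi\neq\pi'$ a working chain always exists. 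Making this rigorous amounts to listing, from the Frobenius coordinates of $\pi$, exactly which small tails can arise after the principal-hook peeling and checking the finitely many residual shapes directly (against character tables, or by a short Murnaghan--Nakayama computation in the style of Lemma~\ref{lemma4}). As an independent cross-check one can also use the $\mathfrak{S}_{n-2}$-branching identities $\chi_{\pi}\big((k_{1},\dots,k_{r},1^{2})\big)=\sum_{\nu}c^{\pi}_{\nu}\,\chi_{\nu}(k_{1},\dots,k_{r})$ with $c^{\pi}_{\nu}\in\{0,1,2\}$ (sum over $\nu\vdash n-2$ obtained from $\pi$ by deleting two boxes) and $\chi_{\pi}\big((k_{1},\dots,k_{r},2)\big)=\sum_{\nu}d^{\pi}_{\nu}\,\chi_{\nu}(k_{1},\dots,k_{r})$ with $d^{\pi}_{\nu}\in\{0,\pm1\}$ (sum over removable dominoes), together with the fact that every $\chi_{\nu}$ equals $\pm1$ on its own principal-hook class; this settles instantly every $\pi$ admitting a unique two-box (resp.\ domino) deletion and reduces the rest to controlling these small signed sums.
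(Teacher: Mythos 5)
There is a genuine gap, and it sits exactly where you predict it: the paper itself offers no argument for this proposition (its ``proof'' is a citation to Proposition 3.1 of Coelho--Duffner \cite{MR1674232}), so what you are attempting is a self-contained proof, and the parts you do carry out are fine --- transposing $\pi$ only changes $\chi_{\pi}$ by sign, the rim is the unique border strip of size $\lambda_{1}+\lambda_{1}'-1$ and peeling it iterates through the principal hooks, and a chain of removals in which each strip is the unique one of its size forces a unique border-strip tableau, hence $|\chi_{\pi}|=1$; in particular your argument is complete whenever the last principal hook has size $2$. But the content of the proposition is precisely the remaining cases, and for those you only name the obstacle and say what ``making this rigorous amounts to'' without doing it. Two concrete problems: first, the residual configurations are not ``finitely many'' as claimed --- the tails you list such as $(b,2,1^{e})$, and the case where $\pi$ itself is a non-symmetric hook (Durfee size $1$), are infinite families, so no finite check against character tables can close them; a uniform inductive choice of strip sizes is needed. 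Second, the assertion that the arm/leg inequality lets the removals from a hook tail $(a,1^{c})$ ``be chosen forced'' is false as stated: in $(a,1^{c})$ there are two border strips of size $a-2$ as soon as $c\ge a-2$ (hook lengths $a-2$ occur both in the first row and in the first column), so the horizontal-then-vertical recipe is not automatically a forced chain, and one must either pick different strip sizes or control a signed sum of several tableaux --- which is exactly the case analysis you defer. Your single worked example $\chi_{(4,3,2)}\bigl((5,2,2)\bigr)=-1$ is correct, but it illustrates rather than proves the general claim that ``for $\pi\neq\pi'$ a working chain always exists.''

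So as it stands this is a credible plan, not a proof: to make it complete you would have to carry out the full classification of the post-peeling tails (in effect re-deriving Coelho--Duffner's case analysis, where the non-symmetry hypothesis is used quantitatively, not just to break ties on the first differing diagonal), or else do what the paper does and quote \cite{MR1674232} directly. The branching-rule cross-check at the end does not rescue the argument either: it only ``settles instantly'' the very small class of $\pi$ with a unique two-box or domino deletion, and for everything else it again reduces to bounding small signed sums that are never bounded.
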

\begin{proof}
See Proposition ($3.1$), Coelho, M. Purifica{\c{c}}{\~a}o and Duffner, M. Ant{\'o}nia \cite{MR1674232}.
\end{proof}
\begin{proposition}\label{3.2}
Let $n>4$ and $\pi$ be a non-symmetric partition of $n$, then there exists nonnegative integers $k_{1}$ $,...,$ $k_{r}$ and $q$ with $k_{r}>1$, $q\ge1$, and $k_{1}$ $+...+$ $k_{r}$ $+q$ $=n$, such that $\chi_{\pi}(\sigma)\ne 0$, where $\sigma$ $\in$ $\mathfrak{S}$ is of type $(k_{1} ,..., k_{r},1^{q})$ or $(k_{1} ,..., k_{r}+1,1^{q-1})$.
\end{proposition}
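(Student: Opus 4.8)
The plan is to prove Proposition \ref{3.2} by a direct application of the Murnaghan--Nakayama rule, treating two cases according to whether the Young diagram of $\pi$ is a hook; in both cases the first of the two displayed cycle types already works, so the alternative $(k_1,\dots,k_r+1,1^{\,q-1})$ is never actually needed.

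\emph{Case 1: $\pi$ is not a hook.} Then $\pi_2\ge2$; write $\ell$ for the number of parts of $\pi$, so $\ell\ge2$ and $\pi_1\ge\pi_2\ge2$. Let $\xi=\pi_1+\ell-1$ be the number of cells on the rim of $\pi$, i.e.\ the cells $(i,j)\in\pi$ with $(i+1,j+1)\notin\pi$; the rim is a connected border strip occupying all $\ell$ rows. First I would record the elementary fact that every border strip of $\pi$ is contained in the rim: if a border strip $\pi/\mu$ contained a cell $(i,j)$ with $(i+1,j+1)\in\pi$, then because $\mu$ is a partition all four cells $(i,j),(i,j+1),(i+1,j),(i+1,j+1)$ would lie in $\pi/\mu$, contradicting the no-$2\times2$ condition. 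Consequently the rim is the \emph{unique} border strip of $\pi$ of size $\xi$, and deleting it leaves the nonempty partition $\nu=(\pi_2-1,\pi_3-1,\dots)$ with $|\nu|=n-\xi\ge1$ (since $n=\sum_i\pi_i\ge\pi_1+\pi_2+(\ell-2)\ge\pi_1+\ell$). By the Murnaghan--Nakayama rule every border-strip tableau of shape $\pi$ and content $(\xi,1^{\,n-\xi})$ is obtained by removing the rim and then removing the remaining $n-\xi$ cells one at a time in some order, all contributing the sign $(-1)^{\ell-1}$; hence
\[
\chi_\pi\big((\xi,1^{\,n-\xi})\big)=(-1)^{\ell-1}\dim M_\nu\neq0,
\]
and one takes $r=1$, $k_1=\xi\ge2$, $q=n-\xi\ge1$.

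\emph{Case 2: $\pi$ is a hook, $\pi=(a,1^{b})$ with $a+b=n$.} Here non-symmetry of $\pi$ is exactly the condition $a\neq b+1$ (the conjugate of $(a,1^b)$ is $(b+1,1^{a-1})$). If $a=1$ then $\chi_\pi$ is the sign character, and if $b=0$ it is the trivial character, so in either case $\chi_\pi\big((2,1^{\,n-2})\big)\neq0$; since $n>4$ one may take $r=1$, $k_1=2$, $q=n-2\ge1$. If $a\ge2$ and $b\ge1$, I would use the classical identity $\chi_\pi\big((2,1^{\,n-2})\big)=\dfrac{\dim M_\pi}{\binom n2}\sum_{(i,j)\in\pi}(j-i)$; for the hook the content sum equals $\binom a2-\binom{b+1}{2}=\tfrac12\,n\,(a-b-1)$, which is nonzero precisely because $a\neq b+1$. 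So once more $r=1$, $k_1=2$, $q=n-2\ge1$ works.

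The only point that needs genuine care is the uniqueness assertion in Case 1 --- that the rim is the only border strip of its own length --- since it is exactly what forces the Murnaghan--Nakayama sum not to cancel; with that in hand the rest is bookkeeping, and verifying the content sum for hooks in Case 2 is a one-line computation. In particular this argument works for every non-symmetric $\pi$ with $n\ge3$ and never invokes the second cycle type, so I expect the hypotheses $n>4$ and the ``or'' in the statement are simply inherited from the inductive proof of Coelho and Duffner in \cite{MR1674232}.
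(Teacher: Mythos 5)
The paper does not actually prove this proposition --- it only cites Coelho--Duffner --- so a self-contained argument would be a genuine contribution. However, your proof has a real gap that stems from misreading the statement. From the way the proposition is consumed in Lemma \ref{prop} (one needs both $\chi_{\pi}((i_{1}\dots i_{p})\sigma)\ne0$ and $\chi_{\pi}((1i_{1}\dots i_{p})\sigma)\ne0$, i.e.\ the character must be nonzero on the class $(k_{1},\dots,k_{r},1^{q})$ \emph{and} on the class $(k_{1},\dots,k_{r}+1,1^{q-1})$ obtained by absorbing one fixed point into the last cycle), the ``or'' in the statement has to be read as ``for $\sigma$ of either of the two types'', i.e.\ nonvanishing on \emph{both} conjugacy classes simultaneously. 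You explicitly prove nonvanishing on only one class and declare the alternative type ``never actually needed''; a witness for that weaker one-class statement is useless for the proof of Theorem \ref{theorem2}.

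Worse, your Case 1 witness provably fails the two-class version. By your own (correct) observation that every border strip of $\pi$ is contained in the rim, there is no border strip of size $\xi+1$ when $\pi$ is not a hook, so the Murnaghan--Nakayama rule gives $\chi_{\pi}\bigl((\xi+1,1^{\,n-\xi-1})\bigr)=0$: the companion class vanishes identically, and $(\xi,1^{n-\xi})$ cannot be used. (In Case 2 your choice does happen to satisfy both conditions, since for a hook $(a,1^{b})$ the only border strips are terminal arm and leg segments, giving $\chi_{\pi}\bigl((3,1^{n-3})\bigr)=[a\ge4]\binom{n-4}{b}+[b\ge3]\binom{n-4}{b-3}$, a sum of nonnegative terms that vanishes only for $(3,1,1)$, which is symmetric --- but you did not check this.) The observation at the end of your write-up, that Case 1 never uses non-symmetry and works for all $n\ge3$, should have been a warning: the two-class statement genuinely requires non-symmetry, and the real difficulty --- producing for each non-symmetric $\pi$ a class with a fixed point and a nontrivial cycle such that merging them preserves nonvanishing of the character --- is exactly the part your argument skips; it is where Coelho and Duffner's induction does its work.
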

\begin{proof}
See Proposition ($3.2$), Coelho, M. Purifica{\c{c}}{\~a}o and Duffner, M. Ant{\'o}nia \cite{MR1674232}.
\end{proof}
\begin{proof}[\textbf{proof of Theorem \ref{theorem2}}]
Since $n\ge 5$, by propositions \ref{3.1} and \ref{3.2}, there exist permutations satisfying conditions in Lemma \ref{prop}, then the theorem follows.
\end{proof}

\leftline{\textbf{Acknowledgement}}

\leftline{I would like to thank the anonymous referee for his/her valuable comments.}

\bibliographystyle{amsplain}

\bibliography{mybib}
\end{document}